\documentclass[a4paper,11pt]{article}

\usepackage[top=3.0cm, bottom=3.0cm, inner=2.5cm, outer=2.5cm,
includefoot]{geometry}

\usepackage{verbatim}
\usepackage{caption}
\usepackage{url}
\usepackage{amsmath}
\usepackage{geometry}
\usepackage{amssymb}
\usepackage{amsmath}
\usepackage{graphicx}
\usepackage{amsthm}
\usepackage{bbm}
\usepackage{float}
\usepackage{color,soul}
\usepackage{hyperref}
\usepackage[T1]{fontenc}
\usepackage{authblk}
\usepackage{booktabs}
\usepackage{longtable}
\usepackage{enumerate}
\usepackage{tikz}
\usetikzlibrary{arrows}
\usetikzlibrary{decorations.markings}
\usepackage{indentfirst}
\setlength{\parindent}{1.5em}

\setlength{\parskip}{2mm }


\newcommand{\eChar}{\begin{enumerate}[(i)]}
\newcommand{\eCharR}{\begin{enumerate}[(a)]}
\newcommand{\eBr}{\begin{enumerate}[(1)]}


\title
{
Edge-connectivity of graphs with non-negative Bakry-\'Emery curvature and amply regular graphs
}

\author[1]{Kaizhe Chen\thanks{Email: ckz22000259@mail.ustc.edu.cn}}
\author[2]{Jack H. Koolen\thanks{Email: koolen@ustc.edu.cn}}
\author[3]{Shiping Liu\thanks{Email: spliu@ustc.edu.cn}}
\affil[1]{School of the Gifted Young, University of Science and Technology of China}
\affil[2,3]{School of Mathematical Sciences, University of Science and Technology of China}

\date{}

\theoremstyle{plain}
\newtheorem{lemma}{Lemma}[section]
\newtheorem{theorem}[lemma]{Theorem}
\newtheorem{proposition}[lemma]{Proposition}
\newtheorem{corollary}[lemma]{Corollary}

\theoremstyle{definition}

\newtheorem{claim}{Claim}

\newtheorem{definition}[lemma]{Definition}

\newtheorem{remark}[lemma]{Remark}

\numberwithin{equation}{section}

\begin{document}

\maketitle

\begin{abstract}
We establish a sharp edge-connectivity estimate for graphs with non-negative Bakry-\'Emery curvature. This leads to a geometric criterion for the existence of a perfect matching.
Precisely, we show that any regular graph with non-negative Bakry-\'Emery curvature and an even or infinite number of vertices has a perfect matching. Through a synthesis of combinatorial and curvature-related techniques, we determine the edge-connectivity of (possibly infinite) amply regular graphs.
\end{abstract}

\section{Introduction}
The discrete Ricci curvature bridges continuous and discrete worlds, allowing powerful tools from Riemannian geometry to be applied in discrete settings (see \cite{NR17} and references therein). Bakry-\'Emery curvature \cite{BE} is a form of discrete Ricci curvature based on the Bochner identity, which has led to many fruitful results (see e.g. \cite{CHLZ,HPS24,HL17,LY10,LMP24,SalezGAFA,SalezJEMS}).

A {\it{matching}} of a graph $G$ is a set of mutually disjoint edges. A matching is {\it{perfect}} if every vertex of $G$ is incident with an edge of the matching.
Determining the existence of perfect matchings in regular graphs is a fundamental problem in graph theory and combinatorial optimization with profound theoretical and algorithmic implications.
For a $d$-regular graph $G$ with an even number of vertices, a well-known result of Brouwer and Haemers \cite[Theorem 3.1]{BrHa05} shows that if the third largest adjacency eigenvalue of $G$ is at most $k-1+\frac{3}{k+2}$, then $G$ has a perfect matching. This condition was further sharpened in \cite{CGH09}. In this paper, we obtain a geometric condition for (possibly infinite) regular graphs to have perfect matchings.

\begin{theorem}\label{BEPM}
    Let $G$ be a connected regular graph with an even or infinite number of vertices. If $G$ has non-negative Bakry-\'Emery curvature, then $G$ has a perfect matching. 
\end{theorem}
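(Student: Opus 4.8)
The plan is to reduce the existence of a perfect matching to a lower bound on the edge-connectivity and to extract that bound from the curvature hypothesis. Writing $d$ for the common degree, I would first prove the geometric estimate that a connected $d$-regular graph with non-negative Bakry-\'Emery curvature satisfies $\kappa'(G) \geq d-1$, where $\kappa'$ denotes edge-connectivity. Granting this, the finite even case follows from a standard parity count against Tutte's theorem, and the infinite case follows because the curvature condition $CD(0,\infty)$ is a statement about finite balls, so the edge-connectivity estimate survives verbatim for locally finite graphs.

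For the combinatorial finish, suppose $G$ is finite with $|V(G)|$ even but has no perfect matching. By Tutte's theorem there is a set $S \subseteq V(G)$ with $o(G-S) > |S|$, where $o$ counts odd components; since $|V(G)|$ is even, $o(G-S) \equiv |S| \pmod 2$, so in fact $o(G-S) \geq |S|+2$. For each odd component $C$ of $G-S$, every edge leaving $C$ lands in $S$, so its edge boundary satisfies $\partial(C) = e(C,S) \geq \kappa'(G) \geq d-1$. Moreover $d|C| = 2e(C) + \partial(C)$ with $|C|$ odd forces $\partial(C) \equiv d \pmod 2$, and combined with $\partial(C) \geq d-1$ this upgrades to $\partial(C) \geq d$ in both parities of $d$. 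Summing over odd components and bounding the edges leaving $S$ by $d|S|$ gives $d\,o(G-S) \leq \sum_{C}\partial(C) \leq d|S|$, hence $o(G-S) \leq |S|$, a contradiction. (This is the regular case of Plesn\'ik's theorem, so it may instead be cited.)

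The main obstacle is the edge-connectivity estimate, which is the geometric heart of the argument. I would argue by contradiction: assume a minimum edge cut has size $\kappa'(G) \leq d-2$, giving a bipartition $V(G)=A\sqcup B$ with $e(A,B)\leq d-2$, and then localize near the cut. The point is that $CD(0,\infty)$ at a vertex $x$ is the positive semidefiniteness of a curvature quadratic form supported on the $2$-ball $B_2(x)$, which forces every edge to lie in many triangles and, more generally, makes the local graph around each vertex quantitatively dense. A small cut would expose a vertex on its smaller side whose crossing edges are too numerous for its neighborhood to be that dense; feeding a test function adapted to the cut (essentially constant on each side) into the $\Gamma_2$-inequality at such a vertex should then contradict $e(A,B)\leq d-2$. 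The delicate points are choosing the vertex and test function so that the $\Gamma_2$-term is controlled by the local cut structure, and verifying that the constant emerging from the curvature matrix is exactly $d-1$, which is where the sharpness and the link to the amply regular structure analyzed later reside.

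For the infinite case, $G$ is connected, locally finite and infinite, and the estimate $\kappa'(G)\geq d-1$ holds unchanged. To produce the matching I would invoke the Tutte-type criterion for locally finite graphs, reducing to the statement that for every finite $S\subseteq V(G)$ the number of \emph{finite} odd components of $G-S$ is at most $|S|$; this follows from exactly the parity count above, since each such component $C$ again has $\partial(C)\geq d$. The subtlety that needs care is the treatment of infinite components of $G-S$ together with the compactness or exhaustion step converting the local criterion into an actual perfect matching, where one appeals to the matching theory of infinite graphs rather than to Tutte's finite theorem directly.
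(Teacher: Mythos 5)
Your reduction is exactly the paper's: Theorem \ref{BEPM} is obtained by combining the edge-connectivity estimate (Theorem \ref{BE}, specialized to regular graphs) with the Chartrand--Goldsmith--Schuster/Tutte argument (Theorem \ref{PM}), and your parity count for odd components is a correct proof of the latter, valid also in the locally finite setting via Tutte's 1952 criterion. That part is fine.

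The genuine gap is that the geometric heart of the argument --- the claim that $CD(\infty,0)$ forces $\kappa'\ge d-1$ --- is only announced, not proved, and the sketch you give underestimates what is required. ``Feeding a test function essentially constant on each side of the cut into the $\Gamma_2$-inequality at a well-chosen vertex'' is the right opening move, but a single such application does not yield a contradiction with $e(A,B)\le d-2$. The paper's proof chooses $x$ maximizing the number of cut edges incident to a single vertex, uses a one-parameter family of functions taking value $\varepsilon$ on $N_1(x)\cap L$, value $1$ on $N_1(x)\cap R$, and a weighted average $2(\varepsilon d_X(z)+d_{\overline{X}}(z))/d_{N_1}(z)$ on $N_2(x)$ (so not constant on each side), optimizes over $\varepsilon$, and then runs a case analysis on $|\overline{X}|=|N_1(x)\cap R|$. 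The cases $|\overline{X}|\ge 3$ follow from the optimized inequality, but the cases $|\overline{X}|\in\{1,2\}$ --- which are unavoidable for a cut of size $\le d-2$ --- require retaining the extra term $\sum_{z\in\overline{A}}d_X(z)^2/d_{N_1}(z)$, pinning down the local configuration of the cut (the vertices $z_0$, $w_1$, $w_2$), and applying the curvature inequality a second time at a different vertex $w_1$, together with an ad hoc test function $g$ in one subcase. None of this is foreseen in your outline, and without it the argument does not close; your proposal also gives no reason why the constant should come out to exactly $d-1$ rather than something weaker. So the proposal is a correct architecture with the decisive lemma left unestablished.
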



Note that Riemannian manifolds (or graphs) with non-negative curvature can be non-compact (or infinite), whereas those with uniform positive curvature lower bounds must be compact (or finite). This constitutes a fundamental distinction between non-negative and positive curvature conditions. 
For instance, the finite hypercube graphs have positive Bakry-\'Emery curvature, while the infinite path has zero Bakry-\'Emery curvature.
 
A graph $G=(V,E)$ of at least two vertices is called {\it $l$-edge-connected} if $G-F$ is connected for every set $F \subset E$ of fewer than $l$ edges.
The {\it edge-connectivity} of $G$ is the greatest integer $l$ such that $G$ is $l$-edge-connected. 
The edge-connectivity of a single vertex is defined to be 0.
The following classic theorem (see e.g. \cite{CGS}) shows that a regular graph with large edge-connectivity must contain a perfect matching.

\begin{theorem}[{\cite{CGS}}]\label{PM}
    Let $G$ be a connected $d$-regular graph with an even number of vertices. If $G$ is $(d-1)$-edge-connected, then $G$ has a perfect matching.
\end{theorem}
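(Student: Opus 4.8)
The plan is to prove Theorem \ref{PM} (the classical result that a connected $d$-regular graph with an even number of vertices that is $(d-1)$-edge-connected has a perfect matching) by invoking Tutte's theorem. Recall that $G$ has a perfect matching if and only if for every vertex subset $S \subseteq V$, the number $o(G-S)$ of odd components of $G-S$ satisfies $o(G-S) \le |S|$. So the strategy is: fix an arbitrary $S \subseteq V$, and bound the number of odd components of $G-S$ in terms of $|S|$ using the regularity and the edge-connectivity hypothesis.

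First I would set up a counting argument on the edges leaving each odd component. Let $C$ be an odd component of $G-S$, and let $\partial C$ denote the set of edges between $C$ and $S$. Because $G$ is $d$-regular, summing degrees over the vertices of $C$ gives $\sum_{v \in C} \deg(v) = d|C| = 2|E(C)| + |\partial C|$, where $E(C)$ is the edge set internal to $C$. Hence $|\partial C| = d|C| - 2|E(C)| \equiv d|C| \pmod 2$. The key parity observation is that an odd component is "hard to isolate cheaply": I would show that $|\partial C|$ cannot be too small. Specifically, if $|\partial C| \le d-2$, then removing these fewer-than-$(d-1)$ edges would disconnect $C$ from the rest of $G$ (assuming $G-S$ has more than one component, or more carefully that $G$ itself is larger), contradicting $(d-1)$-edge-connectivity --- so generically $|\partial C| \ge d-1$. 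Combined with the parity constraint $|\partial C| \equiv d|C| \pmod 2$, I would argue that each odd component sends at least $d-1$ edges into $S$, and in fact sends either exactly $d$ edges or at least $d+1$ edges depending on parity, which is the crux that lets us beat the naive bound.

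The main counting then proceeds by double-counting the edges between $S$ and the odd components. On one hand, each vertex of $S$ has degree $d$, so the total number of edges incident to $S$ is at most $d|S|$, and thus the number of edges running from $S$ to the union of the odd components is at most $d|S|$ (minus edges internal to $S$ and edges to even components, which only helps). On the other hand, each odd component contributes at least $d-1$ such edges, and the parity refinement forces a strict improvement: I would show that the components contributing exactly $d-1$ must be balanced against a deficit, yielding $(d-1)\, o(G-S) \le d|S| - (\text{correction})$, and then push this to the clean inequality $o(G-S) \le |S|$. The parity argument is what upgrades the easy bound $o(G-S) \le \frac{d}{d-1}|S|$ into the exact Tutte inequality.

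The step I expect to be the main obstacle is the parity-and-edge-connectivity interplay that rules out components with $|\partial C| \le d-2$ and handles the boundary cases (for instance when $S = \emptyset$, when an odd component is a single vertex, or when $G-S$ has exactly one component). In the single-vertex case a component $C = \{v\}$ has $|\partial C| = d \ge d-1$ automatically, so that is benign; the genuinely delicate point is verifying that a small edge-boundary would produce an edge cut of size at most $d-2 < d-1$, which requires that both sides of the cut are nonempty --- this needs the assumption that the graph has more than one vertex and a short argument that $S \cup (V \setminus (S \cup C))$ is nonempty whenever $C$ is a proper odd component. Once these edge cases are dispatched, the arithmetic collapses cleanly into Tutte's condition.
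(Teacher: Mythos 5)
The paper does not prove this theorem itself---it only cites \cite{CGS} and notes that the argument rests on Tutte's characterization---and your proposal is precisely that standard Tutte-based counting argument, with all the right ingredients: the degree-sum identity $|\partial C| = d|C| - 2|E(C)|$, the resulting congruence $|\partial C| \equiv d|C| \equiv d \pmod 2$ for an odd component $C$, and the edge-connectivity bound $|\partial C| \ge d-1$ (valid since $S \ne \emptyset$ guarantees both sides of the cut are nonempty, with $S=\emptyset$ handled by connectedness and even order). The only thing to tidy is your final counting step: once parity upgrades $|\partial C| \ge d-1$ to $|\partial C| \ge d$ for \emph{every} odd component, there are no components contributing exactly $d-1$ edges to ``balance against a deficit,'' and the conclusion is immediate from $d\cdot o(G-S) \le e\left(S, V\setminus S\right) \le d|S|$.
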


The proof of Theorem \ref{PM} is based on a celebrated characterization for finite graphs possessing perfect matchings given by Tutte \cite{T47}, who later proved \cite{T52} that his criterion holds also for locally finite graphs. 
Thus, Theorem \ref{PM} also applies to regular graphs with infinitely many vertices.

We achieve Theorem \ref{BEPM} by proving the following stronger conclusion relating the curvature and edge-connectivity of graphs. According to Theorem \ref{PM}, Theorem \ref{BEPM} is a direct consequence of Theorem \ref{BE}.

\begin{theorem}\label{BE}
    Let $G$ be a connected graph with minimum degree $\delta$. If $G$ has non-negative Bakry-\'Emery curvature, then $G$ is $(\delta-1)$-edge-connected.
\end{theorem}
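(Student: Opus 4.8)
The plan is to prove Theorem \ref{BE} by contradiction, using the Bakry-\'Emery curvature condition to control the local structure around a hypothetical small edge cut. Suppose $G$ has minimum degree $\delta$ but is \emph{not} $(\delta-1)$-edge-connected, so there exists an edge cut $F$ with $|F| \le \delta - 2$ whose removal disconnects $G$ into (at least) two components. Let $A$ be a side of this cut; every vertex of $A$ has degree at least $\delta$ in $G$, but only $|F| \le \delta-2$ edges leave $A$. The strategy is to locate an \emph{extremal} vertex in $A$ — for instance a vertex $x \in A$ minimizing some natural quantity, or a vertex on the boundary of $A$ with as few cut-edges incident to it as possible — and then derive a contradiction by computing the Bakry-\'Emery curvature $\mathcal K(x)$ at that vertex and showing it must be strictly negative, violating the hypothesis $\mathcal K \ge 0$.

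The key technical step is to translate the curvature condition into a combinatorial inequality involving only the local neighborhood structure (the number of common neighbors, triangles, and the in/out degree split) at a single vertex. I would recall the standard formula for $\mathcal K(x)$ in terms of the $\Gamma$ and $\Gamma_2$ operators: non-negativity of $\mathcal K(x)$ forces, for every function $f$, the Bochner-type inequality $\Gamma_2(f)(x) \ge 0$. The idea is to choose a carefully designed \emph{test function} $f$ that is sensitive to the cut $F$ — roughly, $f$ should be constant on $A$ (say $f \equiv 0$ on $A \cup \{x\}$) and take a different value across the few edges of $F$ leaving the chosen vertex. Because $x$ has many neighbors \emph{inside} $A$ (where $f$ is flat) and only a handful of neighbors reachable through the cut, the Laplacian term $(\Delta f)(x)$ and the curvature term will be badly mismatched, producing $\Gamma_2(f)(x) < 0$. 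The contradiction then yields the claimed edge-connectivity.

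The main obstacle I anticipate is the correct choice of the extremal vertex together with the test function, since $\Gamma_2$ involves not just $x$ and its neighbors but also the second neighborhood and the triangle/quadrilateral structure around $x$; a naive indicator function of the cut may pick up uncontrolled contributions from two-step paths and from common neighbors on the far side of $F$. I would therefore need a preliminary combinatorial lemma guaranteeing the existence of a boundary vertex $x \in A$ with few incident cut-edges and with its neighbors' neighbors still largely confined to $A$, so that the $\Gamma_2$ computation localizes. An alternative, possibly cleaner route is to avoid explicit test functions and instead invoke a known consequence of non-negative Bakry-\'Emery curvature — such as a lower bound on vertex degrees relative to the local cut structure, or a volume/expansion estimate — that directly contradicts the existence of a cut of size at most $\delta-2$. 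Either way, the crux is to convert the global connectivity deficit into a single-vertex curvature violation.

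Once the contradiction is established at the chosen vertex, the argument closes immediately: since $G$ has non-negative Bakry-\'Emery curvature at every vertex, no edge cut of size at most $\delta - 2$ can exist, and hence $G$ is $(\delta-1)$-edge-connected. For the infinite case I would check that the curvature condition is purely local and that the extremal-vertex selection still goes through — picking any boundary vertex of a (necessarily finite) cut $F$ suffices, so local finiteness is all that is needed and no compactness argument is required.
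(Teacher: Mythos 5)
Your overall strategy---assume a cut $F$ with $|F|\le\delta-2$, pick an extremal boundary vertex $x$, and plug a cut-adapted test function into the curvature inequality at $x$---is exactly the paper's starting point (the paper takes $x$ maximizing the number of incident cut-edges and uses the reformulation of $CD(\infty,K)$ from Horn--Purcilly--Stevens). But what you have written is an outline that stops precisely at the step you yourself identify as ``the main obstacle,'' and that step is where essentially all of the work lies. A naive indicator-type function that is constant on the near side of the cut does \emph{not} produce a contradiction: the second-neighborhood contribution to $\Gamma_2$ is not controlled by making $f$ flat on $A$, because the relevant sum is over vertices $z\in N_2(x)$ weighted by how their edges to $N_1(x)$ split across the cut. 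The paper has to (i) assign to each $z\in N_2(x)$ the value $2\bigl(\varepsilon d_X(z)+d_{\overline X}(z)\bigr)/d_{N_1}(z)$, i.e.\ the minimizer of the resulting quadratic in $f(z)$, which is what turns the $N_2$-term into the manageable expression $-\sum_z(\varepsilon d_X(z)+d_{\overline X}(z))^2/d_{N_1}(z)$; (ii) optimize over the free parameter $\varepsilon$ (taking $\varepsilon=-|\overline X|/|X|$); and (iii) combine this with the combinatorial bound $e(\overline X,N_2)\ge|\overline X|(\delta-2|\overline X|+1)$ coming from the extremality of $x$. None of these ingredients appears in your proposal, and without them the ``badly mismatched Laplacian and curvature terms'' you invoke simply do not materialize.

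More seriously, even with all of the above the argument only closes when $x$ has at least $3$ incident cut-edges. The hardest cases are $|\overline X|=1$ and $|\overline X|=2$, where the basic inequality is consistent with non-negative curvature and the paper must retain the discarded term $\sum_{z\in\overline A}d_X(z)^2/d_{N_1}(z)$, pin down the cut structure vertex by vertex (showing the two cut-neighbors share a second common neighbor $z_0$ in $L$, forcing $\delta\in\{6,7\}$), and then apply the curvature condition \emph{again} at auxiliary vertices ($z_0$, $w_1$) and with a completely different hand-built test function $g$. Your fallback suggestion of invoking ``a known consequence of non-negative Bakry--\'Emery curvature'' does not rescue this: the best previously known bound (connectivity at least $(2K+\delta+5)/8$) is far too weak to exclude a cut of size $\delta-2$. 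So the proposal is a reasonable research plan pointed in the right direction, but it does not constitute a proof; the test-function design, the $\varepsilon$-optimization, and the full case analysis on the number of cut-edges at the extremal vertex all still need to be supplied.
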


Theorem \ref{BE} is sharp for many graphs. For instance, let $G$ be the Cartesian product of the infinite path and the complete graph $K_k$ of size $k$. Then $G$ is $(k+1)$-regular with Bakry-\'Emery curvature zero and edge-connectivity $k$. However, all known graphs for which Theorem \ref{BE} is sharp are infinite. We conjecture that, for finite graphs, the conclusion of Theorem \ref{BE} can be improved to $\delta$-edge-connected.

We briefly review some previous works on relations between connectivity and discrete Ricci curvature. For  a connected graph $G$ with Bakry-\'Emery curvature $K$ and minimum degree $\delta$, Horn, Purcilly, and Stevens \cite{HPS24} prove that the connectivity of $G$ is at least $(2K+\delta +5)/8$. Lin-Lu-Yau curvature \cite{LLY,O} is another form of discrete Ricci curvature. The first and third name authors and You \cite{CLY} prove that the connectivity of a graph is at least the product of its Lin-Lu-Yau curvature lower bound and its minimum degree, and the edge-connectivity of a graph with positive Lin-Lu-Yau curvature is equal to its minimum degree.



The edge-connectivity of highly symmetric graphs has been widely studied. It has been shown that the edge-connectivity equals the degree for finite vertex-transitive graphs \cite{Mad71} and distance-regular graphs \cite{BrHa05,BK}. In this paper, we determine the edge-connectivity of (possibly infinite) amply regular graphs (see Section \ref{2} for the definition). Notice that all distance-regular graphs are amply regular.

\begin{theorem}\label{ARG}
    Let $G$ be a connected amply regular graph with parameters $(d,\alpha,\beta)$ with $\beta>1$. Then $G$ is $d$-edge-connected. Further, if $G$ is not a quadrangle, then the only disconnecting sets of $d$ edges are the sets of $d$ edges on a single vertex.
\end{theorem}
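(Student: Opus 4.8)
The plan is to prove the stronger statement that, apart from $G=C_4$, every edge cut of size at most $d$ is \emph{trivial}, i.e.\ consists of the $d$ edges incident to a single vertex. Since removing the $d$ edges at any vertex isolates it, the edge-connectivity is at most $d$, so it suffices to rule out every other cut of at most $d$ edges. Let $F$ be a disconnecting set with $|F|\le d$, let $A$ be a component of $G-F$ and $B=V\setminus A$; then $G[A]$ is connected and the edge boundary $\partial A$ (the set of edges between $A$ and $B$) satisfies $|\partial A|\le d$. I call the cut trivial when $|A|=1$ or $|B|=1$.

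First I would treat the case that $A$ is finite, say $|A|=m\ge 2$. Counting degrees gives $|\partial A|=dm-2e(A)$, where $e(A)$ is the number of edges inside $A$; together with $|\partial A|\le d$ this forces $e(A)\ge d(m-1)/2$, and since also $e(A)\le\binom{m}{2}$ we obtain $m\ge d$. Thus a nontrivial finite fragment is large and dense. The clean extremal case is $m=d$: here both bounds are tight, so $A$ is a clique $K_d$ in which every vertex has exactly one neighbour in $B$. Now ample regularity enters. Constancy of $\alpha$ forces either all these external edges to share one endpoint $w$ — in which case $w$ has all its neighbours in $A$, so $B=\{w\}$ and the cut is trivial — or all external endpoints to be distinct. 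In the latter situation I would apply $\beta>1$ to each distance-$2$ pair consisting of an external endpoint and a non-matched clique vertex; the forced second common neighbour must itself be an external endpoint, so the $d$ external endpoints are mutually adjacent and form a second $K_d$. Hence $G=K_d\,\square\,K_2$, which is amply regular only when $d=2$, i.e.\ $G=C_4$. This is precisely where $\beta>1$ and the quadrangle exception are used.

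The main obstacle is ruling out $m>d$, in which $A$ necessarily contains interior vertices (all of whose neighbours lie in $A$). Here I would choose $A$ to be a fragment of minimum size among all nontrivial cuts of at most $d$ edges and argue by extremality: deleting a boundary vertex $s$ from $A$ changes the cut size to $|\partial A|+d-2b(s)$, where $b(s)$ is the number of neighbours of $s$ in $B$, so minimality sharply restricts how the external degree can be distributed over the inner boundary. Feeding this into the distance-$2$ forcing from $\beta>1$ — each interior vertex adjacent to the inner boundary acquires at least $\beta$ neighbours there, and each active outer-boundary vertex sees at least $\beta$ vertices of $A$ — I expect to derive that the fragment cannot support any interior vertex, so that it collapses to the clique case above. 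Making this collapse rigorous, and bookkeeping the common-neighbour counts \emph{without} appealing to a global vertex count, is the most delicate part.

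Finally, for infinite $G$ I would note that $F$ is finite, so if either side of $G-F$ is finite the finite analysis applies to it directly; if both sides are infinite, the same local arguments go through, since the inner and outer boundaries $S,T$ are finite with $|S|,|T|\le d$ and the $\beta>1$ forcing is purely local, while the structural conclusion is the finite clique/prism configuration that no infinite component can realise. Combining all cases, the only edge cuts of at most $d$ edges are trivial unless $G=C_4$; in particular $G$ is $d$-edge-connected, and for $G\ne C_4$ the disconnecting sets of exactly $d$ edges are precisely the stars of single vertices.
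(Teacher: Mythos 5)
Your degree-counting setup and the extremal case $m=d$ are correct: a nontrivial finite fragment $A$ with $|\partial A|\le d$ satisfies $e(A)\ge d(m-1)/2$ and $e(A)\le\binom{m}{2}$, hence $m\ge d$; when $m=d$ the fragment is a clique with a perfect matching to the outside, and the constancy of $\alpha$ together with $\beta\ge 2$ does force either a trivial cut or $G=K_d\,\square\,K_2$, which is amply regular only for $d=2$. But this is the easy part of the theorem. The entire difficulty lies in the case $m>d$ (and in the case where both sides of the cut are infinite, where the identity $|\partial A|=dm-2e(A)$ is unavailable), and there your argument is only a declared intention: you write that you ``expect to derive'' the collapse of fragments with interior vertices and concede that making it rigorous is ``the most delicate part.'' The minimality computation $|\partial(A\setminus\{s\})|=|\partial A|+d-2b(s)$ only yields $b(s)<|\partial A|/2$ for boundary vertices $s$ whose removal leaves a connected nontrivial fragment, which is far from eliminating interior vertices; no mechanism is given for converting the local $\beta$-forcing into a contradiction. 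So the proposal has a genuine gap at its central step.

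It is worth noting that the paper's proof takes a different route precisely because this step is hard. Instead of minimizing the fragment, the authors fix a vertex $x$ maximizing the number of cross edges, prove lower bounds on the cross-degree of every vertex incident to the cut (their Claims \ref{claim2} and \ref{claim3}), a lower bound $|\overline{X}|\ge\lceil\alpha/2\rceil+1$ (Claim \ref{claim4}), and a double-counting estimate (Claim \ref{claim5}), and then split on the relation between $\alpha$ and $\beta$. Crucially, the hardest case $\alpha>\beta=2$ is \emph{not} resolved combinatorially: it requires the Bakry--\'Emery curvature formula for amply regular graphs (Theorem \ref{thm:ARGCurvature}), the curvature-to-cut inequality of Corollary \ref{lemma2}, and the Brouwer--Neumaier theorem (Theorem \ref{BCN}). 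This strongly suggests that the purely combinatorial ``collapse'' you hope for is not a routine extremality argument, and the proposal cannot be accepted without it.
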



The above theorem does not hold for amply regular graphs with $\beta =1$. 
The infinite path serves as a counterexample; a finite counterexample is provided in Figure \ref{ARGexample}.

\begin{figure}[htbp!]
\centering
\includegraphics[scale=0.45]{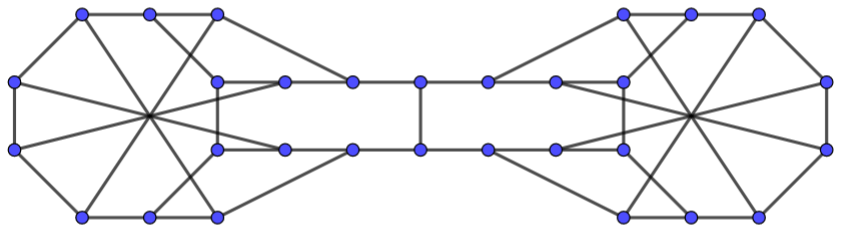}
\caption{An amply regular graph with degree 3 and edge-connectivity 2}
\label{ARGexample}
\end{figure}

It is worth noting that we know of no amply regular graph with $\beta >1$ which has non-positive Bakry-\'Emery curvature; nevertheless, it has not yet been established that every such graph has positive Bakry-\'Emery curvature. However, we prove Theorem \ref{ARG} through a synthesis of combinatorial and geometric techniques.

Finally, by combining Theorem \ref{PM} and Theorem \ref{ARG}, we obtain the following result on the existence of perfect matchings in amply regular graphs, which substantially generalizes the corresponding results for finite distance-regular graphs \cite[Corollary 4.2]{BrHa05}.


\begin{corollary}
     Let $G$ be a connected amply regular graph with parameters $(d,\alpha,\beta)$ with $\beta>1$. If $G$ has an even or infinite number of vertices, then $G$ has a perfect matching.
\end{corollary}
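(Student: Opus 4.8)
The plan is to derive the corollary as an immediate synthesis of Theorem \ref{ARG} and Theorem \ref{PM}, which is precisely how the statement is advertised. The first step is to record the one structural fact about $G$ that the two input theorems require beyond connectivity: by definition (see Section \ref{2}), an amply regular graph with parameters $(d,\alpha,\beta)$ is $d$-regular. In particular it is locally finite, a point that will matter in the infinite case.

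Next, since $\beta>1$, Theorem \ref{ARG} applies and tells us that $G$ is $d$-edge-connected. Being $d$-edge-connected trivially implies being $(d-1)$-edge-connected: if $F\subset E$ satisfies $|F|<d-1$, then also $|F|<d$, so $G-F$ is connected. Hence $G$ is a connected $d$-regular graph that is $(d-1)$-edge-connected, which is exactly the hypothesis of Theorem \ref{PM}.

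In the case of an even number of vertices, Theorem \ref{PM} then yields a perfect matching directly. For the infinite case I would invoke the locally finite extension of Theorem \ref{PM}: as noted immediately after its statement, the underlying Tutte criterion was shown by Tutte \cite{T52} to remain valid for locally finite graphs, so the same hypotheses---connected, $d$-regular, $(d-1)$-edge-connected---produce a perfect matching when the vertex set is infinite. Because a $d$-regular graph is automatically locally finite, this covers exactly the infinite alternative in the hypothesis.

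There is essentially no obstacle here, since all of the mathematical content resides in Theorems \ref{ARG} and \ref{PM} and the corollary is a one-line deduction. The only point that deserves a moment of care is the infinite case, where one must check that it is the existence-of-a-matching conclusion, and not merely the edge-connectivity bound, that transfers to the locally finite setting; this is precisely what Tutte's locally finite version of his perfect matching theorem guarantees.
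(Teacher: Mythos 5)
Your proposal is correct and matches the paper's argument exactly: the corollary is obtained by combining Theorem \ref{ARG} (which gives $d$-edge-connectivity, hence $(d-1)$-edge-connectivity) with Theorem \ref{PM}, using the remark after Theorem \ref{PM} that Tutte's criterion extends to locally finite graphs to cover the infinite case. No gaps.
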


Throughout the paper, we use the following notations.  Let $G=(V,E)$ be a simple locally finite graph, and $V,E$ respectively denote the set of vertices and edges of $G$. For a vertex $x$ in $G$, we write $N_1(x)$ for the set of vertices adjacent to $x$ and $N_2(x)$ for the set of vertices at distance two from $x$, while the degree of $x$ is denoted by $d(x):= |N_1(x)|$.
For fixed vertex $x$, we simply write $N_1$ instead of $N_1(x)$ and $N_2$ instead of $N_2(x)$. 
For a vertex set $X$, let $d_X (x)$ denote the number of neighbors of $x$ in $X$.
For two vertices $x$ and $y$, we write $x\sim y$ if $x$ is adjacent to $y$.
For two disjoint vertex sets $X$ and $Y$, we write $e(X,Y)$ for the numbers of edges with one end in $X$ and the other in $Y$.

\section{Preliminaries}\label{2}
We first introduce the concept of Bakry-\'Emery curvature.
Let $G=(V,E)$ be a locally finite graph. The (standard) Laplacian $\Delta$ is defined as follows. For any function $f:V\to \mathbb{R}$ and any vertex $x\in V$, 
$$\Delta f(x):=\sum_{y\sim x}(f(y)-f(x)).$$
The gradient operator $\Gamma$ and the iterated gradient operator $\Gamma_2$ are defined as follows. For any two functions $f,g:V\to \mathbb{R}$ and any vertex $x\in V$,
\begin{align*}
\Gamma(f,g)(x):=&\frac{1}{2}\left(\Delta(fg)-(\Delta f)g-f(\Delta g)\right)(x).\\
\Gamma_{2}(f,g)(x):=&\frac{1}{2}\left(\Delta\Gamma(f,g)-\Gamma(\Delta f,g)-\Gamma(f,\Delta g)\right)(x).
\end{align*}
For simplicity, we denote $\Gamma(f)(x):=\Gamma(f,f)(x)$ and $\Gamma_{2}(f)(x):=\Gamma_{2}(f,f)(x)$.

\begin{definition}[Bakry--\'Emery curvature \cite{BE,LY10}] Let $G$ be a locally finite graph. For $K\in \mathbb{R}$ and $N\in (0,\infty]$, we say that $G$ satisfies the Bakry--\'Emery curvature dimension inequality $CD(N,K)$ at a vertex $x$, if for any function $f:V\rightarrow {\mathbb{R}}$, we have
\begin{equation}\label{curvatureineq}
    \Gamma_{2}(f)(x)\geq\frac{1}{N}(\Delta f)^{2}(x)+K\Gamma(f)(x).
\end{equation}
\end{definition}
Here, we use the notation $\frac{1}{N}=0$ when $N=\infty$.
We say that $G$ satisfies $CD(N,K)$, if it satisfies $CD(N,K)$ at every vertex.
We define the Bakry--\'Emery curvature of $G$ at $x$ as
$$K_{BE}(G,x):=\sup\{K: G\ \text{satisfies}\ CD(\infty,K)\ \text{at}\ x\}.$$
The Bakry--\'Emery curvature $K_{BE}(G)$ of $G$ is defined as the infimum of $K_{BE}(G,x)$ over all vertices $x$ in $G$.

We mention that the Bakry-\'Emery curvature $K_{BE}(G,x)$ of $G$ at $x$ can be reformulated as the minimal eigenvalue of the so-called curvature matrix at $x$, which is constructed from the local structure of $x$, see \cite{CKLP22,Si21}.

Horn, Purcilly, and Stevens \cite[Proposition 1]{HPS24} recently derive the following proposition of Bakry--\'Emery curvature, which is a basis of our later proof.

\begin{proposition}[\cite{HPS24}]\label{PH}
    Let $G=(V,E)$ be a graph and $x$ be a vertex in $G$. Then $G$ satisfies $CD(\infty, K)$ at $x$ if and only if for all functions $f:V\rightarrow \mathbb{R}$,
    \begin{align}\notag
        \sum_{z\in N_2}\sum_{\substack{y\in N_1 \\ y\sim z}}\left[ \frac{1}{4}(f(z)-f(y))^2-\frac{1}{2} (f(z)-f(y))(f(y)-f(x)) \right] +\sum_{\substack{yy'\in E\\ y,y'\in N_1}} (f(y)-f(y'))^2
        \\ \notag \ge \left( \frac{2K+d(x)-3}{2} \right)\Gamma (f)(x) -\frac{1}{2}(\Delta f(x))^2.
    \end{align}
\end{proposition}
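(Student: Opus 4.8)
The plan is to reduce the stated equivalence to a single pointwise algebraic identity for $\Gamma_2(f)(x)$ and then verify that identity by a direct local computation. Since $N=\infty$ forces $\tfrac1N=0$, the condition $CD(\infty,K)$ at $x$ is by definition exactly $\Gamma_2(f)(x)\ge K\,\Gamma(f)(x)$ for every $f$. Moving $K\,\Gamma(f)(x)$ to the other side and comparing with the asserted inequality, one sees that the two statements coincide provided we establish the identity
\begin{align*}
\Gamma_2(f)(x)=&\sum_{z\in N_2}\sum_{\substack{y\in N_1\\ y\sim z}}\left[\tfrac14(f(z)-f(y))^2-\tfrac12(f(z)-f(y))(f(y)-f(x))\right]\\
&+\sum_{\substack{yy'\in E\\ y,y'\in N_1}}(f(y)-f(y'))^2+\tfrac12(\Delta f(x))^2-\tfrac{d(x)-3}{2}\,\Gamma(f)(x),
\end{align*}
valid for all $f$. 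Indeed, once this holds, the $K$-dependence is precisely accounted for by the summand $\tfrac{2K}{2}\Gamma(f)(x)$ inside the coefficient $\tfrac{2K+d(x)-3}{2}$ of the proposition, so only this $K$-free identity remains to be checked.

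To prove the identity I would start from $\Gamma_2(f)(x)=\tfrac12\Delta\Gamma(f)(x)-\Gamma(\Delta f,f)(x)$ and expand both pieces over the $2$-ball of $x$. For the first piece, write $\Delta\Gamma(f)(x)=\sum_{y\sim x}\Gamma(f)(y)-d(x)\Gamma(f)(x)$ and, for each $y\in N_1$, split $N_1(y)$ into $x$, the neighbors of $y$ lying in $N_1$, and those lying in $N_2$ (these exhaust $N_1(y)$ because $y\in N_1$). This produces three contributions to $\sum_{y\sim x}\Gamma(f)(y)$: a single $\Gamma(f)(x)$, the within-$N_1$ edge sum $\sum_{yy'}(f(y)-f(y'))^2$ (each such edge being counted from both endpoints), and $\tfrac12\sum_{z\in N_2}\sum_{y\in N_1,\,y\sim z}(f(z)-f(y))^2$. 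For the second piece I would expand $\Delta f(y)-\Delta f(x)$ into the same three groups together with the global term $-\Delta f(x)$; multiplying by $f(y)-f(x)$ and summing over $y\in N_1$ then yields a $-\Gamma(f)(x)$ term, a within-$N_1$ cross term, the $N_2$ cross term $\tfrac12\sum_{z}\sum_{y}(f(z)-f(y))(f(y)-f(x))$, and $-\tfrac12(\Delta f(x))^2$, the last using $\sum_{y\in N_1}(f(y)-f(x))=\Delta f(x)$.

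The one genuinely non-routine step is the treatment of edges inside $N_1$, since they contribute squared differences through $\tfrac12\Delta\Gamma(f)(x)$ and bilinear cross terms through $\Gamma(\Delta f,f)(x)$. Here I would pass from the ordered sum over adjacent pairs in $N_1$ to the unordered edge sum by means of the orientation identity $(f(y')-f(y))(f(y)-f(x))+(f(y)-f(y'))(f(y')-f(x))=-(f(y')-f(y))^2$; this shows the cross-term contribution equals $-\tfrac12\sum_{yy'}(f(y)-f(y'))^2$, so that after the subtraction in $\Gamma_2=\tfrac12\Delta\Gamma-\Gamma(\Delta f,f)$ it becomes $+\tfrac12\sum_{yy'}(f(y)-f(y'))^2$ and combines with the $+\tfrac12\sum_{yy'}$ arising from $\tfrac12\Delta\Gamma(f)(x)$ to give exactly coefficient $1$. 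Collecting the $\Gamma(f)(x)$ coefficients then gives $\tfrac{1-d(x)}2+1=-\tfrac{d(x)-3}2$, the $(\Delta f(x))^2$ terms assemble into $+\tfrac12(\Delta f(x))^2$, and the $N_2$ terms combine into the bracketed expression of the proposition, completing the identity and hence the equivalence. I expect the main obstacle to be purely the careful bookkeeping — in particular keeping straight the double-counting of $N_1$-edges together with the single sign from the symmetrization, so that the constant $\tfrac{d(x)-3}2$ emerges correctly.
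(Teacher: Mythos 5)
Your proof is correct: the identity
\begin{align*}
\Gamma_2(f)(x)=&\sum_{z\in N_2}\sum_{\substack{y\in N_1\\ y\sim z}}\Bigl[\tfrac14(f(z)-f(y))^2-\tfrac12(f(z)-f(y))(f(y)-f(x))\Bigr]\\
&+\sum_{\substack{yy'\in E\\ y,y'\in N_1}}(f(y)-f(y'))^2+\tfrac12(\Delta f(x))^2-\tfrac{d(x)-3}{2}\,\Gamma(f)(x)
\end{align*}
does hold, and your bookkeeping (the coefficient $\tfrac{1-d(x)}{2}+1=-\tfrac{d(x)-3}{2}$ on $\Gamma(f)(x)$, the symmetrization of the $N_1$-edge cross terms, and the sign flip producing $+\tfrac12(\Delta f(x))^2$) all checks out. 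The paper itself gives no proof of this proposition --- it is quoted verbatim from \cite{HPS24} --- and your direct expansion of $\Gamma_2=\tfrac12\Delta\Gamma-\Gamma(\Delta f,\cdot)$ over the $2$-ball is precisely the standard derivation found in that reference, so there is nothing to compare beyond confirming correctness.
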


We next recall the definition of an amply regular graph.

\begin{definition}[Amply regular graph \cite{BCN89}] 
    Let $G$ be a $d$-regular graph, which is neither complete nor empty. Then $G$ is called an amply regular graph with parameters $(d,\alpha,\beta)$ if any two adjacent vertices have exactly $\alpha$ common neighbors, and any two vertices at distance two have exactly $\beta$ common neighbors.
\end{definition}

The following theorem of amply regular graphs due to Brouwer and Neumaier \cite{BN} (see also \cite[Chapter 1.2]{BCN89}) is useful in the proof of Theorem \ref{ARG}.

\begin{theorem}[\cite{BN}]\label{BCN}
    An amply regular graph with parameters $(d,\alpha,2)$ such that $d < \frac{1}{2}\alpha(\alpha+ 3)$ contains no $K_{2,1,1}$.
\end{theorem}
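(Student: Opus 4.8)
The plan is to argue the contrapositive: assuming $G$ contains a $K_{2,1,1}$, I will show $d \ge \tfrac{1}{2}\alpha(\alpha+3)$. Since $\beta = 2$, a $K_{2,1,1}$ is precisely an edge $wz$ two of whose common neighbours $u,v$ are non-adjacent (the unique non-edge of $K_{2,1,1}$), so $w\sim z$ and $u,v \in N_1(w)\cap N_1(z)$ with $u\not\sim v$. Write $C := N_1(w)\cap N_1(z)$, so that $|C|=\alpha$ and $u,v\in C$. Throughout, the property I will really exploit is $\beta=2$: any two vertices at distance $2$ have exactly two common neighbours.

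First I would establish two structural facts. If $a,b\in C$ are non-adjacent, then they lie at distance $2$ and both $w,z$ are common neighbours, so by $\beta=2$ these are their \emph{only} common neighbours; in particular $a,b$ have no common neighbour inside $C$. Hence adjacency on $C$ is transitive, i.e. $G[C]$ is $P_3$-free, so $G[C]$ is a disjoint union of cliques $Q_1,\dots,Q_m$, and $m\ge 2$ because $u\not\sim v$. The companion fact is a ``clique-capture'' lemma: for any clique $L$ of $G$ and any $x\notin L$, if $x$ were adjacent to three vertices $a,b,c\in L$ but missed some $e\in L$, then $a,b,c$ would be three common neighbours of the distance-$2$ pair $x,e$, contradicting $\beta=2$; thus every $x\notin L$ is adjacent to at most two vertices of $L$, or to all of $L$.

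The core estimate comes from the local graph $H := G[N_1(w)]$. Since adjacent vertices have $\alpha$ common neighbours, $H$ is $\alpha$-regular on $d$ vertices, and by $\beta=2$ any two non-adjacent vertices of $H$ have at most one common neighbour in $H$. Note $N_H(z)=C$. I would count the edges between $C$ and $N_1(w)\setminus(C\cup\{z\})$ two ways: each $a\in C$ has $z$ as one $H$-neighbour and $|Q|-1$ neighbours inside the clique $Q$ containing it, hence sends $\alpha-|Q|$ edges outward, for a total of $\alpha^2-\sum_i|Q_i|^2$; while every vertex outside $C\cup\{z\}$ is non-adjacent to $z$ and so has at most one neighbour in $C$. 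This gives
\[
 d \;\ge\; 1+\alpha+\Bigl(\alpha^2-\textstyle\sum_i|Q_i|^2\Bigr).
\]
When the $Q_i$ are small — for instance all of size one, as for the icosahedron with $(\alpha,\beta)=(2,2)$ — this already yields $d\ge \alpha^2+1\ge \tfrac12\alpha(\alpha+3)$, and the conclusion follows.

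The hard part will be the opposite extreme, where $C$ has one large clique (of size close to $\alpha$): then $\sum_i|Q_i|^2$ is large and the displayed bound degrades to something linear in $\alpha$. Here I would instead count the neighbourhood of the ``lonely'' vertex $u$, which lies in a clique of $C$ disjoint from the large one and therefore, by the clique-capture lemma applied to $L=Q_{\mathrm{big}}\cup\{w,z\}$, meets $L$ in exactly the two vertices $w,z$. The common neighbours of $u$ with $w$ other than $z$, and of $u$ with $z$ other than $w$, furnish $2(\alpha-1)$ distinct neighbours of $u$; applying $\beta=2$ to the pairs $(u,p)$ with $p$ ranging over the large clique should force a further, disjoint layer of neighbours of $u$. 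The crucial and most delicate step — where the exact constant $\tfrac12\alpha(\alpha+3)$ gets pinned down — is to prove that this second layer is disjoint from the first and has size at least $\binom{\alpha}{2}$, and then to optimise over all admissible clique partitions of $\alpha$ so that the worse of the two regimes (small-clique versus large-clique) still delivers $\tfrac12\alpha(\alpha+3)$. I expect the bookkeeping needed to certify that these second-layer neighbours are genuinely distinct to be the main obstacle.
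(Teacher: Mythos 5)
This statement is quoted from Brouwer--Neumaier \cite{BN}; the paper itself gives no proof of it, so there is no internal argument to compare against and I can only assess your proposal on its own terms. Your preliminary structure is sound: with $\beta=2$ the common neighbourhood $C$ of the edge $wz$ does induce a disjoint union of cliques, the clique-capture observation is correct, and the double count of edges between $C$ and $N_1(w)\setminus(C\cup\{z\})$ correctly gives $d\ge 1+\alpha+\alpha^2-\sum_i|Q_i|^2$, which settles the case where all the cliques $Q_i$ are singletons (indeed $\alpha^2+1\ge\tfrac12\alpha(\alpha+3)$ for every integer $\alpha$, and $\alpha\ge 2$ is automatic once a $K_{2,1,1}$ exists).

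The gap is the entire remaining case, which you only sketch, and the mechanism you propose for it does not work. For $p$ in the large clique and $u$ in a different clique of $C$, the pair $(u,p)$ is at distance $2$ and \emph{already} has the two common neighbours $w$ and $z$; the condition $\beta=2$ therefore forbids any further common neighbours rather than producing new neighbours of $u$. So ``applying $\beta=2$ to the pairs $(u,p)$'' yields only negative information (no vertex besides $w,z$ is adjacent to both $u$ and $p$), and you have identified no source for the $\binom{\alpha}{2}$ second-layer vertices needed to reach $2\alpha+\binom{\alpha}{2}=\tfrac12\alpha(\alpha+3)$. You also do not address the intermediate clique partitions (e.g.\ two cliques of comparable size), where even the first-layer count of $2(\alpha-1)$ distinct neighbours degrades because $N(u)\cap N(w)\setminus\{z\}$ and $N(u)\cap N(z)\setminus\{w\}$ overlap in $Q_u\setminus\{u\}$ when $u$'s clique is not a singleton. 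As it stands, the proposal proves the theorem only when $G[C]$ is edgeless; the step in which the constant $\tfrac12\alpha(\alpha+3)$ is actually pinned down is missing, and you acknowledge as much. For a complete argument one should consult the original proof in \cite{BN} (via partial linear spaces of girth $5$) or \cite[Section 1.2]{BCN89}.
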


For a fixed graph $G$, we simply write $K_{BE}(x)$ instead of $K_{BE}(G,x)$. Now, let us recall the Bakry--\'Emery curvature of amply regular graphs, see \cite[Theorem 4.3]{CHLZ}.

\begin{theorem}[\cite{CHLZ}]\label{thm:ARGCurvature}
    Let $G=(V,E)$ be an amply regular graph with parameters $(d,\alpha,\beta)$. Let $x\in V$ be a vertex and $A_{N_1}$ be the adjacency matrix of the local graph induced by $N_1(x)$. Then, 
    \begin{equation}\notag
    K_{BE}(x)=2+\frac{\alpha}{2}+\left(\frac{2d(\beta-2)-\alpha^2}{2\beta}+\frac{2}{\beta}\min_{\lambda\in \mathrm{sp} \left(A_{N_1}\right)}\left(\lambda-\frac{\alpha}{2}\right)^2\right)_-,
    \end{equation}
    where the minimum is taken over all eigenvalues of $A_{N_1}$, and $a_-:=\min\{0,a\}$, for any $a\in \mathbb{R}$.
\end{theorem}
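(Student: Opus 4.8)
The plan is to compute $K_{BE}(x)$ directly from the variational characterization in Proposition \ref{PH}, exploiting that in an amply regular graph every combinatorial count appearing there is governed by $(d,\alpha,\beta)$ together with the local adjacency matrix $A:=A_{N_1}$. Since both sides of the inequality in Proposition \ref{PH} depend on $f$ only through differences of values, I would first normalize $f(x)=0$ and write $a_y:=f(y)$ for $y\in N_1$, assembled into a vector $a\in\IR^{N_1}$, and $b_z:=f(z)$ for $z\in N_2$. Then $\Gamma(f)(x)=\tfrac12\|a\|^2$ and $\Delta f(x)=\langle a,\mathbf{1}\rangle$, so the inequality becomes $L(a,b)\ge \tfrac{2K+d-3}{4}\|a\|^2-\tfrac12\langle a,\mathbf{1}\rangle^2$, and $K_{BE}(x)$ is the largest $K$ for which this holds for all $a,b$.

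Next I would eliminate the $N_2$-variables. For each $z\in N_2$ the quantity $L$ contains only the terms $\sum_{y\in N_1,\,y\sim z}\bigl[\tfrac14(b_z-a_y)^2-\tfrac12(b_z-a_y)a_y\bigr]$, which is a strictly convex quadratic in the free scalar $b_z$ with leading coefficient $\beta/4>0$, because $z$ (being at distance two from $x$) has exactly $\beta$ neighbors in $N_1$. As distinct $b_z$ are decoupled, I minimize each independently by completing the square; the minimum value is $\tfrac34\sum_{y\sim z}a_y^2-\tfrac1\beta\bigl(\sum_{y\sim z}a_y\bigr)^2$. Since the right-hand side is independent of $b$, verifying the inequality for all $b$ is equivalent to verifying it for this worst case, which reduces the whole problem to nonnegativity of a pure quadratic form $a^{T}Ma$.

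The core computation is to express $M$ as a polynomial in $A$ by double counting over the $2$-ball. Each $y\in N_1$ has $d-1-\alpha$ neighbors in $N_2$ (removing $x$ and the $\alpha$ common neighbors lying in $N_1$), which handles the diagonal. For distinct $y,y'\in N_1$, the number of their common neighbors in $N_2$ equals $(\alpha-\beta)A_{yy'}+(\beta-1)-(A^2)_{yy'}$: the total number of common neighbors is $\alpha$ or $\beta$ according as $y\sim y'$ or not, from which one subtracts the common neighbor $x$ and the common neighbors inside $N_1$, the latter being exactly $(A^2)_{yy'}$. This is the step that brings in $A^2$. Combining it with the edge contribution $\sum_{yy'\in E,\,y,y'\in N_1}(a_y-a_{y'})^2=\alpha\|a\|^2-a^{T}Aa$, I obtain
$$M=\tfrac1\beta A^2-\tfrac\alpha\beta A+\Bigl(\tfrac12-\tfrac{\beta-1}\beta\Bigr)J+c_I\,I,$$
where $J$ is the all-ones matrix and $c_I$ is an explicit quantity linear in $K$. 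I expect this counting identity, and checking that the three regimes assemble into the single closed form above, to be the main obstacle; the rest is bookkeeping.

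Finally I would diagonalize. The local graph is $\alpha$-regular, so $A\mathbf{1}=\alpha\mathbf{1}$, whence $AJ=JA=\alpha J$ and $A,A^2,J,I$ are simultaneously diagonalizable. On the eigenvector $\mathbf{1}$ (with $A$-eigenvalue $\alpha$ and $J$-eigenvalue $d$) the $A^2$ and $A$ terms cancel, and solving for the largest admissible $K$ yields precisely the base value $2+\tfrac\alpha2$. On any eigenvector orthogonal to $\mathbf{1}$ with $A$-eigenvalue $\lambda$ (so $J$ acts as $0$) the threshold becomes $2+\tfrac\alpha2+\tfrac{2d(\beta-2)-\alpha^2}{2\beta}+\tfrac2\beta\bigl(\lambda-\tfrac\alpha2\bigr)^2$, which is exactly the asserted expression with its nonnegative quadratic-in-$\lambda$ correction. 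Requiring $M\succeq0$ amounts to taking the minimum of these thresholds over all directions; since the $\mathbf{1}$-direction reproduces the base term $2+\tfrac\alpha2$, the minimum over the full spectrum $\mathrm{sp}(A_{N_1})$ together with the $(\cdot)_-$ truncation gives the stated formula. A small point to verify carefully is that folding the $\mathbf{1}$-direction into a minimum over the whole spectrum is legitimate, which is where the explicit evaluation of the $\mathbf{1}$-contribution and the sign of $\tfrac{d(\beta-2)}{\beta}$ enter.
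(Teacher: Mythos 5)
This theorem is imported from \cite{CHLZ} and the present paper contains no proof of it, so there is nothing internal to compare against; judged on its own, your derivation is correct and is the natural route given the paper's toolkit, namely specializing Proposition \ref{PH} and reducing to an eigenvalue problem for a local matrix (the ``curvature matrix'' viewpoint of \cite{CKLP22,Si21} that the paper alludes to). I checked the key computations: eliminating each $b_z$ gives $\tfrac34\sum_{y\sim z}a_y^2-\tfrac1\beta S_z^2$ with $S_z=\sum_{y\sim z}a_y$; the count of common $N_2$-neighbours of $y\neq y'$ is indeed $(\alpha-\beta)A_{yy'}+(\beta-1)-(A^2)_{yy'}$; assembling everything yields $M=\tfrac1\beta A^2-\tfrac\alpha\beta A+\tfrac{2-\beta}{2\beta}J+c_II$, whose $\mathbf 1$-eigenvalue forces $K\le 2+\tfrac\alpha2$ and whose eigenvalue on a $\lambda$-eigenvector orthogonal to $\mathbf 1$ forces $K\le 2+\tfrac\alpha2+\tfrac{2d(\beta-2)-\alpha^2}{2\beta}+\tfrac2\beta(\lambda-\tfrac\alpha2)^2$, exactly as you state. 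The one point you flag but do not close --- that replacing the minimum over eigenvalues having an eigenvector orthogonal to $\mathbf 1$ by the minimum over all of $\mathrm{sp}(A_{N_1})$ cannot make the formula too small --- does need an argument, and the reason is not the sign of $d(\beta-2)/\beta$ but the following: since $g(\lambda):=\tfrac{2d(\beta-2)-\alpha^2}{2\beta}+\tfrac2\beta(\lambda-\tfrac\alpha2)^2$ is a parabola centred at $\alpha/2$ and $\alpha$ is the largest eigenvalue, $g(\alpha)$ can be the strict minimum over the spectrum only if every other eigenvalue is negative; for a connected $\alpha$-regular local graph this means exactly one positive eigenvalue, hence the local graph is complete multipartite, hence regular complete multipartite, which has eigenvalue $0$ (with $g(0)=g(\alpha)$) unless it is a complete graph, in which case $G$ itself would be complete and is excluded; and if the local graph is disconnected, $\alpha$ itself has an eigenvector orthogonal to $\mathbf 1$. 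With that observation supplied, your proof is complete.
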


\section{Propositions of Bakry-\'Emery curvature}
As a preparation, we present the following Lemma, where the proof is inspired by a recent approach of \cite{HPS24}.

\begin{lemma}\label{lemma1}
    Let $G=(V,E)$ be a graph satisfying $CD(\infty,K)$ at $x\in V$. Suppose that $X,\overline{X}$ partition $N_1(x)$ and $A,\overline{A}$ partition $N_2(x)$. Then, for any constant $\varepsilon\in \mathbb{R}$, we have
    \begin{align}\notag
        &(1-\varepsilon)^2\left[ e(X,\overline{X})+ e(X,\overline{A})+ e(\overline{X},A) -\sum_{z\in A} \frac{\left( d_{\overline{X}}(z) \right)^2}{d_{N_1}(z)} -\sum_{z\in \overline{A}} \frac{d_{X}(z)^2}{d_{N_1}(z)} \right] 
        \\ \notag \ge\ &\frac{1}{4}(2K+d(x)-3)\left( \varepsilon^2|X| +|\overline{X}|\right)+\frac{1}{4} \left( \varepsilon^2 e(X,N_2) + e(\overline{X},N_2) \right) -\frac{1}{2}\left( \varepsilon|X|+ |\overline{X}| \right)^2.
    \end{align}
\end{lemma}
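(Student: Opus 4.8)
The plan is to feed a single well-chosen test function into the curvature inequality of Proposition~\ref{PH} and then optimize the values left free on $N_2$. Concretely, I would take $f(x)=0$, $f\equiv\varepsilon$ on $X$, $f\equiv 1$ on $\overline{X}$, and regard $f(z)$ for $z\in N_2$ as parameters to be fixed at the end. With this choice the right-hand side of Proposition~\ref{PH} is immediate: since $\Gamma(f)(x)=\tfrac12(\varepsilon^2|X|+|\overline{X}|)$ and $\Delta f(x)=\varepsilon|X|+|\overline{X}|$, it equals $\tfrac14(2K+d(x)-3)(\varepsilon^2|X|+|\overline{X}|)-\tfrac12(\varepsilon|X|+|\overline{X}|)^2$, already reproducing two of the three terms on the right-hand side of the Lemma. (All sums are finite because $G$ is locally finite.)

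Next I would evaluate the left-hand side of Proposition~\ref{PH}. The edge sum over pairs in $N_1$ collapses to $(1-\varepsilon)^2\,e(X,\overline{X})$, as only $X$--$\overline{X}$ edges contribute. For the $N_2$ sum I would complete the square in the summand: using $f(x)=0$, every incidence $y\sim z$ with $y\in N_1$, $z\in N_2$ gives $\tfrac14(f(z)-f(y))^2-\tfrac12(f(z)-f(y))f(y)=\tfrac14(f(z)-2f(y))^2-\tfrac14 f(y)^2$. Summing the terms $-\tfrac14 f(y)^2$ over all such incidences yields exactly $-\tfrac14(\varepsilon^2 e(X,N_2)+e(\overline{X},N_2))$ (here one uses $\sum_{z}d_X(z)=e(X,N_2)$), which is the remaining right-hand term of the Lemma and which I would transpose. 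What is left on the left is $(1-\varepsilon)^2 e(X,\overline{X})+\sum_{z\in N_2}\bigl[\tfrac14 d_X(z)(f(z)-2\varepsilon)^2+\tfrac14 d_{\overline{X}}(z)(f(z)-2)^2\bigr]$.

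The decisive step is to fix each $f(z)$ so as to minimize its quadratic contribution. Since every $z\in N_2$ has a neighbor in $N_1$, the weight $d_{N_1}(z)=d_X(z)+d_{\overline{X}}(z)$ is positive, the minimizer $f(z)=2(\varepsilon d_X(z)+d_{\overline{X}}(z))/d_{N_1}(z)$ is well defined, and by the elementary identity $\min_a\bigl(\alpha(a-u)^2+\beta(a-v)^2\bigr)=\tfrac{\alpha\beta}{\alpha+\beta}(u-v)^2$ (with $\alpha=d_X(z)$, $\beta=d_{\overline{X}}(z)$, $u=2\varepsilon$, $v=2$) the contribution of $z$ becomes $(1-\varepsilon)^2 d_X(z)d_{\overline{X}}(z)/d_{N_1}(z)$. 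As Proposition~\ref{PH} holds for every $f$, evaluating it at this minimizing function gives
\begin{align}\notag
(1-\varepsilon)^2\Bigl[\,e(X,\overline{X})+\sum_{z\in N_2}\frac{d_X(z)d_{\overline{X}}(z)}{d_{N_1}(z)}\,\Bigr]\ \ge\ \text{(right-hand side of the Lemma)}.
\end{align}

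It remains to recast the single $N_2$-sum in the stated form using the partition $A,\overline{A}$. Writing $d_X(z)=d_{N_1}(z)-d_{\overline{X}}(z)$ gives for $z\in A$ the identity $d_X(z)d_{\overline{X}}(z)/d_{N_1}(z)=d_{\overline{X}}(z)-d_{\overline{X}}(z)^2/d_{N_1}(z)$, and symmetrically $d_X(z)d_{\overline{X}}(z)/d_{N_1}(z)=d_X(z)-d_X(z)^2/d_{N_1}(z)$ for $z\in\overline{A}$; summing over $A$ and $\overline{A}$ turns $\sum_{z\in N_2}d_X(z)d_{\overline{X}}(z)/d_{N_1}(z)$ into $e(\overline{X},A)+e(X,\overline{A})-\sum_{z\in A}d_{\overline{X}}(z)^2/d_{N_1}(z)-\sum_{z\in\overline{A}}d_X(z)^2/d_{N_1}(z)$, which is precisely the bracket in the Lemma. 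I expect the only real difficulty to lie in guessing the correct test function and the minimizing values on $N_2$, since everything afterwards is forced; the remaining points needing care are the bookkeeping of incidence sums such as $\sum_z d_X(z)=e(X,N_2)$ and the correct weighting in the square-completion. Notably the partition $A,\overline{A}$ plays no role in the optimization itself and serves only to repackage the final bound.
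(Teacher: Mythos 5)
Your proposal is correct and follows essentially the same route as the paper: it applies Proposition \ref{PH} to exactly the same test function ($\varepsilon$ on $X$, $1$ on $\overline{X}$, $2(\varepsilon d_X(z)+d_{\overline{X}}(z))/d_{N_1}(z)$ on $N_2$, $0$ elsewhere) and performs the same bookkeeping, the only difference being that you motivate the values on $N_2$ as pointwise minimizers via completing the square, whereas the paper simply substitutes and expands. All of your intermediate identities and the final repackaging over $A,\overline{A}$ check out against the paper's computation.
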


\begin{proof}
    To use Proposition \ref{PH}, we let $f:V\rightarrow \mathbb{R}$ be the function defined as follows:
    \begin{center}
    $f(z)=\begin{cases}
       \varepsilon, &{\rm if}\ z\in X;\\
       1, &{\rm if}\ z\in \overline{X};\\
       2\left( \varepsilon d_X(z)+d_{\overline{X}}(z) \right)/d_{N_1}(z), &{\rm if}\ z\in N_2;\\
       0, &{\rm otherwise}.
    \end{cases}$
    \end{center}
    It follows that
    \begin{equation}\label{3.1}
        \sum_{\substack{yy'\in E\\ y,y'\in N_1}} (f(y)-f(y'))^2= (1-\varepsilon)^2 e(X,\overline{X}).
    \end{equation}
    By the definition of $\Delta$ and $\Gamma$, we have
    \begin{equation}\label{3.2}
        \Delta f(x)= \varepsilon|X|+ |\overline{X}|
    \end{equation}
    and
    \begin{equation}\label{3.3}
        \Gamma (f)(x)= \frac{1}{2}\left( \varepsilon^2|X| +|\overline{X}| \right).
    \end{equation}
    
    For any vertex $z\in N_2$, we have
    \begin{align}\notag
        &\sum_{\substack{y\in N_1 \\ y\sim z}}\left[ \frac{1}{4}(f(z)-f(y))^2-\frac{1}{2} (f(z)-f(y))(f(y)-f(x))\right]
        \\ \notag =\ &d_X(z)\left[ \frac{1}{4}(f(z)-\varepsilon)^2-\frac{1}{2} (f(z)-\varepsilon)\varepsilon\right]
        + d_{\overline{X}}(z)\left[ \frac{1}{4}(f(z)-1)^2-\frac{1}{2} (f(z)-1)\right]
        \\ \notag =\ &\frac{1}{4}d_{N_1}(z) f(z)^2- \left( \varepsilon d_X(z)+d_{\overline{X}}(z) \right) f(z) +\frac{3}{4} \left( \varepsilon^2 d_X(z) + d_{\overline{X}}(z) \right)
        \\ \notag =\ &-\frac{\left( \varepsilon d_X(z)+d_{\overline{X}}(z) \right)^2}{d_{N_1}(z)} + \frac{3}{4} \left( \varepsilon^2 d_X(z) + d_{\overline{X}}(z) \right).
    \end{align}
    Summing the above equation for all $z\in N_2$ yields
    \begin{align}\notag
        &\sum_{z\in N_2}\sum_{\substack{y\in N_1 \\ y\sim z}}\left[ \frac{1}{4}(f(z)-f(y))^2-\frac{1}{2} (f(z)-f(y))(f(y)-f(x)) \right]
        \\ \label{3.4} =\ &-\sum_{z\in N_2} \frac{\left( \varepsilon d_X(z)+d_{\overline{X}}(z) \right)^2}{d_{N_1}(z)} + \frac{3}{4} \left( \varepsilon^2 e(X,N_2) + e(\overline{X},N_2) \right).
    \end{align}
    
    Observe that
    \begin{align}\notag
        \sum_{z\in N_2} \frac{\left( \varepsilon d_X(z)+d_{\overline{X}}(z) \right)^2}{d_{N_1}(z)}&= \sum_{z\in A} \frac{\left( \varepsilon d_X(z)+d_{\overline{X}}(z) \right)^2}{d_{N_1}(z)} + \sum_{z\in \overline{A}} \frac{\left( \varepsilon d_X(z)+d_{\overline{X}}(z) \right)^2}{d_{N_1}(z)}
        \\ \notag &=\sum_{z\in A} \frac{\left( \varepsilon d_{N_1}(z)+(1-\varepsilon) d_{\overline{X}}(z) \right)^2}{d_{N_1}(z)} + \sum_{z\in \overline{A}} \frac{\left( d_{N_1}(z)-(1-\varepsilon)d_{X}(z) \right)^2}{d_{N_1}(z)}.
    \end{align}
    We write the first term of the above expression as follows:
    \begin{align}\notag
        \sum_{z\in A} \frac{\left( \varepsilon d_{N_1}(z)+(1-\varepsilon) d_{\overline{X}}(z) \right)^2}{d_{N_1}(z)} &= \sum_{z\in A} \left[ \varepsilon^2 d_{N_1}(z) +2\varepsilon(1-\varepsilon)d_{\overline{X}}(z) \right] + (1-\varepsilon)^2 \sum_{z\in A} \frac{\left( d_{\overline{X}}(z) \right)^2}{d_{N_1}(z)} 
        \\ \notag &=\varepsilon^2 e(N_1,A) +2\varepsilon(1-\varepsilon) e(\overline{X},A) + (1-\varepsilon)^2 \sum_{z\in A} \frac{\left( d_{\overline{X}}(z) \right)^2}{d_{N_1}(z)}
        \\ \notag &=\varepsilon^2 e(X,A) -\left(\varepsilon^2 -2\varepsilon\right) e(\overline{X},A) + (1-\varepsilon)^2 \sum_{z\in A} \frac{\left( d_{\overline{X}}(z) \right)^2}{d_{N_1}(z)}.
    \end{align}
    Similarly, we have
    \begin{align}\notag
        \sum_{z\in \overline{A}} \frac{\left( d_{N_1}(z)-(1-\varepsilon)d_{X}(z) \right)^2}{d_{N_1}(z)}&= \sum_{z\in \overline{A}} \left[ d_{N_1}(z) -2(1-\varepsilon)d_{X}(z) \right] + (1-\varepsilon)^2  \sum_{z\in \overline{A}} \frac{\left( d_{X}(z) \right)^2}{d_{N_1}(z)}
        \\ \notag &= e(N_1,\overline{A})-2(1-\varepsilon) e(X,\overline{A}) + (1-\varepsilon)^2  \sum_{z\in \overline{A}} \frac{\left( d_{X}(z) \right)^2}{d_{N_1}(z)}
        \\ \notag &= e(\overline{X},\overline{A})-(1-2\varepsilon) e(X,\overline{A}) + (1-\varepsilon)^2  \sum_{z\in \overline{A}} \frac{\left( d_{X}(z) \right)^2}{d_{N_1}(z)}.
    \end{align}
    Then, we deduce that
    \begin{align}\notag
        &\sum_{z\in N_2} \frac{\left( \varepsilon d_X(z)+d_{\overline{X}}(z) \right)^2}{d_{N_1}(z)}
        \\ \notag =\ &\varepsilon^2 e(X,N_2)+e(\overline{X},N_2)- (1-\varepsilon)^2 \left[ e(\overline{X},A)+e(X,\overline{A}) -\sum_{z\in A} \frac{\left( d_{\overline{X}}(z) \right)^2}{d_{N_1}(z)} - \sum_{z\in \overline{A}} \frac{\left( d_{X}(z) \right)^2}{d_{N_1}(z)} \right].
    \end{align}
    Combining the above equation with the equation \eqref{3.4}, we arrive at
    \begin{align}\notag
        &\sum_{z\in N_2}\sum_{\substack{y\in N_1 \\ y\sim z}}\left[ \frac{1}{4}(f(z)-f(y))^2-\frac{1}{2} (f(z)-f(y))(f(y)-f(x)) \right]
        \\ \notag =\ &(1-\varepsilon)^2 \left[ e(\overline{X},A)+e(X,\overline{A}) -\sum_{z\in A} \frac{\left( d_{\overline{X}}(z) \right)^2}{d_{N_1}(z)} - \sum_{z\in \overline{A}} \frac{\left( d_{X}(z) \right)^2}{d_{N_1}(z)} \right] -\frac{1}{4} \left( \varepsilon^2 e(X,N_2) + e(\overline{X},N_2) \right).
    \end{align}
    The desired result then follows by substituting the above equation and equations \eqref{3.1}, \eqref{3.2}, and \eqref{3.3} into Proposition \ref{PH}.
\end{proof}

In the amply regular case, we have the following result.

\begin{corollary}\label{lemma2}
    Let $G$ be an amply regular graph with parameters $(d,\alpha,\beta)$ and $x$ be a vertex in $G$. Suppose that $X,\overline{X}$ partition $N_1(x)$ and $A,\overline{A}$ partition $N_2(x)$. If $G$ satisfies $CD(\infty,K)$ at $x$, then
    \begin{equation}\notag 
        e(X,\overline{X})+ e(X,\overline{A})+ e(\overline{X},A)\ge \frac{1}{4d} \left(2K+2d-\alpha-4\right)
        |X||\overline{X}|.
    \end{equation}
\end{corollary}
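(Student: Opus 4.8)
The plan is to derive this corollary directly from Lemma~\ref{lemma1} by specializing every quantity to the amply regular setting and then making a judicious choice of the free parameter $\varepsilon$. First I would record the combinatorial identities that hold at the fixed vertex $x$ of an amply regular graph with parameters $(d,\alpha,\beta)$. Since $G$ is $d$-regular, $d(x)=d$, and since $X,\overline{X}$ partition $N_1(x)$ we have the crucial relation $|X|+|\overline{X}|=|N_1(x)|=d$. Moreover the local graph on $N_1(x)$ is $\alpha$-regular (each neighbour of $x$ has exactly $\alpha$ common neighbours with $x$, all lying in $N_1(x)$), so every $y\in N_1(x)$ sends exactly $d-1-\alpha$ edges into $N_2(x)$; hence $e(X,N_2)=(d-1-\alpha)|X|$ and $e(\overline{X},N_2)=(d-1-\alpha)|\overline{X}|$.

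Next I would discard the two subtracted sums in Lemma~\ref{lemma1}. Writing $S:=\sum_{z\in A}d_{\overline{X}}(z)^2/d_{N_1}(z)+\sum_{z\in\overline{A}}d_X(z)^2/d_{N_1}(z)\ge 0$ and $L:=e(X,\overline{X})+e(X,\overline{A})+e(\overline{X},A)$, and denoting the right-hand side of Lemma~\ref{lemma1} by $\mathrm{RHS}(\varepsilon)$, the lemma reads $(1-\varepsilon)^2(L-S)\ge \mathrm{RHS}(\varepsilon)$. Since $(1-\varepsilon)^2\ge 0$ and $S\ge 0$, this yields $(1-\varepsilon)^2 L\ge \mathrm{RHS}(\varepsilon)$. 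Substituting $d(x)=d$ and the two edge counts above, the coefficient $(2K+d-3)$ and the coefficient $(d-1-\alpha)$ combine, so that
\[
\mathrm{RHS}(\varepsilon)=\tfrac14(2K+2d-\alpha-4)\bigl(\varepsilon^2|X|+|\overline{X}|\bigr)-\tfrac12\bigl(\varepsilon|X|+|\overline{X}|\bigr)^2 .
\]

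The key step is the choice of $\varepsilon$. Assuming both $X$ and $\overline{X}$ are nonempty (otherwise $|X||\overline{X}|=0$ and the assertion is the trivial bound $L\ge 0$), I would set $\varepsilon=-|\overline{X}|/|X|$. This choice annihilates the squared Laplacian term $\tfrac12(\varepsilon|X|+|\overline{X}|)^2$. Using $|X|+|\overline{X}|=d$, a short computation gives $\varepsilon^2|X|+|\overline{X}|=|\overline{X}|\,d/|X|$ and $(1-\varepsilon)^2=d^2/|X|^2>0$, whence
\[
L\ \ge\ \frac{\mathrm{RHS}(\varepsilon)}{(1-\varepsilon)^2}=\frac{\tfrac14(2K+2d-\alpha-4)\,|\overline{X}|\,d/|X|}{d^2/|X|^2}=\frac{1}{4d}(2K+2d-\alpha-4)\,|X||\overline{X}|,
\]
which is exactly the claimed inequality.

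I do not expect a genuine obstacle here: once Lemma~\ref{lemma1} is in hand the argument is purely algebraic, and the only real decision is spotting the value $\varepsilon=-|\overline{X}|/|X|$ that both kills the $(\Delta f)^2$-type term and, after dividing by $(1-\varepsilon)^2$, produces the symmetric factor $|X||\overline{X}|$ via the identity $|X|+|\overline{X}|=d$. The one point to treat with care is the degenerate situation where $X$ or $\overline{X}$ is empty, in which $\varepsilon$ is undefined (or the stated bound is vacuous) but where $|X||\overline{X}|=0$ makes the inequality reduce to $L\ge 0$, so it holds trivially. Notably, the parameter $\beta$ enters Lemma~\ref{lemma1} only through $d_{N_1}(z)$ in the term $S$, which we simply drop; thus $\beta$ plays no role in this particular estimate beyond ensuring $d_{N_1}(z)>0$.
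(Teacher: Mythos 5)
Your proposal is correct and follows essentially the same route as the paper's own proof: drop the two non-negative subtracted sums in Lemma~\ref{lemma1}, use $d_{N_2}(y)=d-\alpha-1$ for $y\in N_1(x)$ to evaluate $e(X,N_2)$ and $e(\overline{X},N_2)$, and take $\varepsilon=-|\overline{X}|/|X|$ to kill the $(\Delta f)^2$-term and produce the factor $|X||\overline{X}|$ after dividing by $(1-\varepsilon)^2=(d/|X|)^2$. Your handling of the degenerate case $|X||\overline{X}|=0$ and the remark that $\beta$ is never used both agree with the paper (the latter is exactly the paper's Remark 3.3).
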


\begin{proof}
    If $|X|=0$, the proof is done. Assume that $|X|\ge 1$.
    For any vertex $y\in N_1$, since $x$ and $y$ have exactly $\alpha$ common neighbors, we have $d_{N_1}(y)=\alpha$ and hence $d_{N_2}(y)=d-\alpha-1$.
    It follows that $e(X,N_2)=|X|(d-\alpha-1)$ and $e(\overline{X},N_2)=|\overline{X}|(d-\alpha-1)$.
    Thus, Lemma \ref{lemma1} implies
    \begin{align}\notag
        &(1-\varepsilon)^2\left[ e(X,\overline{X})+ e(X,\overline{A})+ e(\overline{X},A) \right] 
        \\ \notag \ge\ &\frac{1}{4}(2K+d-3)\left( \varepsilon^2|X| +|\overline{X}|\right)+\frac{1}{4} (d-\alpha-1) \left( \varepsilon^2 |X| + |\overline{X}| \right) -\frac{1}{2}\left( \varepsilon|X|+ |\overline{X}| \right)^2.
    \end{align}
    Taking $\varepsilon =-|\overline{X}|/|X|$ yields
    \begin{align}\notag
        \left(\frac{d}{|X|}\right)^2\left[ e(X,\overline{X})+ e(X,\overline{A})+ e(\overline{X},A) \right] \ge\frac{1}{4}(2K+d-3) \frac{d|\overline{X}|}{|X|} +\frac{1}{4} (d-\alpha-1) \frac{d|\overline{X}|}{|X|} .
    \end{align}
    The desired result then follows by dividing both sides by $\left( d/|X| \right)^2$.
\end{proof}

\begin{remark}
    A $d$-regular graph with $n$ vertices is called \emph{edge-regular} with parameters $(n,d,\alpha)$ if any two adjacent vertices have precisely $\alpha$ common neighbors \cite[Section 1.1]{BCN89}. From the proof, we know that  Corollary \ref{lemma2} holds for any edge-regular graph with parameters $(n,d,\alpha)$. Indeed, we also allow infinite edge-regular graphs.  
\end{remark}

\section{Edge-connectivity and Bakry-\'Emery curvature}

\begin{proof}[Proof of Theorem \ref{BE}]
    Let $G=(V,E)$ be a connected graph satisfying $CD(\infty,0)$. Let $\delta$ and $\kappa'$ be the  minimum degree and the edge-connectivity of $G$, respectively. For a contradiction, we assume that $\kappa'\le \delta-2$.
    Then, $\delta\ge \kappa'+2\ge 3$.
    By the definition of edge-connectivity, we suppose that $L,R$ is a partition of $V$ such that $e(L,R)=\kappa'$. For any vertex $z$, let $d_{L,R}(z)$ be $d_{L}(z)$ when $z\in R$ and be $d_{R}(z)$ when $z\in L$. 
    Set 
    $$\Delta_{L,R}:=\max_{z\in V}\{ d_{L,R}(z) \}.$$
    Without loss of generality, assume that $x\in L$ is a vertex with $d_R(x)=\Delta_{L,R}$. 
    For simplicity, we write $d:=d(x)$.
    Set $X:=N_1(x)\cap L$, $\overline{X}:=N_1(x)\cap R$, $A:=N_2(x)\cap L$, and $\overline{A}:=N_2(x)\cap R$. 

    We first show the following claim.
    
    \begin{claim}\label{claim}
        We have $e(\overline{X}, N_2)\ge |\overline{X}|(\delta -2|\overline{X}|+1)$.
    \end{claim}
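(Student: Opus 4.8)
The plan is to count the edges contributing to $e(\overline{X}, N_2)$ vertex by vertex from the $\overline{X}$ side. Since $\overline{X}\subseteq N_1$ and $N_2\cap N_1=\emptyset$, each edge counted in $e(\overline{X},N_2)$ has exactly one endpoint in $\overline{X}$, so
\[
e(\overline{X},N_2)=\sum_{y\in \overline{X}} d_{N_2}(y).
\]
The key structural observation is that every neighbor of a vertex $y\in N_1$ lies in $\{x\}\cup N_1\cup N_2$, because any neighbor of $y$ is at distance at most $2$ from $x$. Hence for each $y\in\overline{X}$ we have the exact decomposition $d(y)=1+d_{N_1}(y)+d_{N_2}(y)$, which rearranges to
\[
d_{N_2}(y)=d(y)-1-d_{N_1}(y)\ge \delta-1-d_{N_1}(y),
\]
using the minimum-degree hypothesis $d(y)\ge\delta$. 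So it suffices to establish the upper bound $d_{N_1}(y)\le 2|\overline{X}|-2$ for every $y\in\overline{X}$; summing the resulting inequality $d_{N_2}(y)\ge \delta-2|\overline{X}|+1$ over the $|\overline{X}|$ vertices of $\overline{X}$ then yields the claim.

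Next I would split $d_{N_1}(y)=d_X(y)+d_{\overline{X}}(y)$ and bound the two terms separately. The term $d_{\overline{X}}(y)$ is at most $|\overline{X}|-1$ trivially, since $y$ itself lies in $\overline{X}$ and the graph is simple. The bound I expect to require more care — and which is the crux of the argument — is $d_X(y)\le |\overline{X}|-1$. Here I would exploit the maximality built into the choice of $x$: by construction $|\overline{X}|=d_R(x)=\Delta_{L,R}$, so for the vertex $y\in\overline{X}\subseteq R$ we have $d_L(y)=d_{L,R}(y)\le \Delta_{L,R}=|\overline{X}|$. On the other hand, the neighbors of $y$ lying in $L$ include both $x$ (which is in $L$ and adjacent to $y$) and every vertex of $X=N_1(x)\cap L$ adjacent to $y$, so $d_L(y)\ge 1+d_X(y)$. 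Combining these gives $d_X(y)\le d_L(y)-1\le |\overline{X}|-1$.

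With both terms controlled we obtain $d_{N_1}(y)\le 2(|\overline{X}|-1)$ and therefore $d_{N_2}(y)\ge \delta-2|\overline{X}|+1$ for every $y\in\overline{X}$, and summing over $\overline{X}$ gives the desired inequality. The main obstacle, as indicated, is recognizing that the edge $xy$ itself occupies one of the at most $\Delta_{L,R}$ crossing edges available at $y$; this is precisely what upgrades the naive bound $d_X(y)\le |\overline{X}|$ to $d_X(y)\le |\overline{X}|-1$ and makes the additive $+1$ in the statement come out exactly. Note that the curvature hypothesis plays no role in this claim: it is a purely combinatorial consequence of the minimum-degree assumption together with the extremal choice of $x$ as a vertex attaining $\Delta_{L,R}$.
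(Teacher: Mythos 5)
Your proof is correct and follows essentially the same route as the paper: both bound $d_{N_2}(y)$ from below for each $y\in\overline{X}$ using the maximality of $x$ (which gives $d_L(y)\le d_R(x)=|\overline{X}|$, equivalently your $d_X(y)\le|\overline{X}|-1$ after accounting for the edge $xy$) together with the trivial bound $d_{\overline{X}}(y)\le|\overline{X}|-1$, and then sum over $\overline{X}$. The only cosmetic difference is that the paper writes the degree decomposition as $d(y)\ge d_L(y)+d_{\overline{X}}(y)+d_{N_2}(y)$ rather than isolating the edge $xy$ explicitly.
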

    \begin{proof}
         By the selection of $x$, we have $d_{L}(y)\le d_{R}(x)=|\overline{X}|$ for any $y\in \overline{X}$. Thus,
    $$d_{N_2}(y)\ge d(y)-d_{L}(y)-d_{\overline{X}}(y)\ge \delta-|\overline{X}|-(|\overline{X}|-1)=\delta -2|\overline{X}|+1.$$
    Summing over all $y\in \overline{X}$ yields $e(\overline{X}, N_2)\ge |\overline{X}|(\delta -2|\overline{X}|+1)$.
    \end{proof}
    
    Since $G$ satisfies $CD(\infty,0)$, Lemma \ref{lemma1} implies particularly that the following inequality holds for any constant $\varepsilon$:
    \begin{align}\notag
        &(1-\varepsilon)^2\left[ e(X,\overline{X})+ e(X,\overline{A})+ e(\overline{X},A)  \right] 
        \\ \label{eq} \ge\ &\frac{1}{4}(d-3)\left( \varepsilon^2|X| +|\overline{X}|\right)+\frac{1}{4}  e(\overline{X},N_2) -\frac{1}{2}\left( \varepsilon|X|+ |\overline{X}| \right)^2.
    \end{align}
    Since \begin{equation}\label{BarX}
        |\overline{X}|=e(\{x\},\overline{X})\le e(L,R) = \kappa'\le \delta-2, 
    \end{equation}
    we have $|X|=d-|\overline{X}|\ne 0$.
    Now, taking $\varepsilon=-|\overline{X}|/|X|$ in \eqref{eq}, we derive
    \begin{align}\label{eq'}
        \left(1+\frac{|\overline{X}|}{|X|}\right)^2\left[ e(X,\overline{X})+ e(X,\overline{A})+ e(\overline{X},A)  \right] 
        \ge\ \frac{1}{4}(d-3)\left( \frac{|\overline{X}|^2}{|X|} +|\overline{X}|\right)+\frac{1}{4}  e(\overline{X},N_2).
    \end{align}
    Note that $$e(X,\overline{X})+ e(X,\overline{A})+ e(\overline{X},A) \le e(L,R)- e(\{x\},\overline{X})= \kappa'- |\overline{X}|\le \delta -2 - |\overline{X}|.$$
    The inequality \eqref{eq'} yields
    \begin{align}\notag
        \left(1+\frac{|\overline{X}|}{|X|}\right)^2\left( \delta -2- |\overline{X}| \right)
        \ge\ \frac{1}{4}(d-3)\left( \frac{|\overline{X}|^2}{|X|} +|\overline{X}|\right)+\frac{1}{4}  e(\overline{X},N_2).
    \end{align}
    Upon multiplying both sides by $(|X|/d)^2$ and replacing $|X|$ by $d-|\overline{X}|$, we arrive at
    \begin{align}\notag
        \delta -2- |\overline{X}| &\ge \frac{d-3}{4d}|\overline{X}|(d-|\overline{X}|) + \frac{1}{4}\left( \frac{d-|\overline{X}|}{d} \right)^2 e(\overline{X},N_2)
        \\ \label{eq3.8} &\ge \frac{\delta-3}{4\delta}|\overline{X}|(\delta-|\overline{X}|) + \frac{1}{4}\left( \frac{\delta-|\overline{X}|}{\delta} \right)^2 e(\overline{X},N_2).
    \end{align}
    
    Now, we divide the proof into four cases according to the size of $\overline{X}$.

    \noindent\textbf{Case 1.} We have $|\overline{X}|\ge 5$.

    The inequality \eqref{eq3.8} implies particularly that
    \begin{equation}\label{eq3.8'}
        \delta -2- |\overline{X}|\ge \frac{\delta-3}{4\delta}|\overline{X}|(\delta-|\overline{X}|).
    \end{equation}
    Consider the above inequality as a quadratic inequality with respect to $|\overline{X}|$. By \eqref{BarX}, we have $5\le |\overline{X}|\le \delta -2$. 
    We will obtain a contradiction by showing that \eqref{eq3.8'} does not hold when $|\overline{X}|$ equals $5$ or $\delta-2$. It is clear that \eqref{eq3.8'} does not hold for $|\overline{X}|=\delta-2$. Now, replacing $|\overline{X}|$ by 5 in \eqref{eq3.8'}, we find
    $4\delta(\delta-7)\ge 5(\delta-3)(\delta-5)$, which is impossible.

    \noindent\textbf{Case 2.} We have $|\overline{X}|=3$ or $|\overline{X}|=4$.

    If $|\overline{X}|=3$, then Claim \ref{claim} gives $e(\overline{X}, N_2)\ge 3(\delta -5)$.
    It follows by \eqref{eq3.8} that \begin{align}\notag
        \delta -5\ge \frac{3(\delta-3)^2}{4\delta} + \frac{3}{4}\left( \frac{\delta-3}{\delta} \right)^2 (\delta -5).
    \end{align}
    Elementary calculation shows that $\delta< 2$, which is a contradiction.

    If $|\overline{X}|=4$, then Claim \ref{claim} gives $e(\overline{X}, N_2)\ge 4(\delta -7)$.
    Similarly, inequality \eqref{eq3.8} yields \begin{align}\notag
        \delta -6\ge \frac{\delta-3}{\delta}(\delta-4) + \left( \frac{\delta-4}{\delta} \right)^2 (\delta -7),
    \end{align}
    which is also impossible for any $\delta\ge 3$.

    To prove the case of $|\overline{X}|\le 2$, inequality \eqref{eq} is not sufficient. We need the following inequality, which is also derived by Lemma \ref{lemma1} but stronger than \eqref{eq}:
    \begin{align}\notag
        &(1-\varepsilon)^2\left[ e(X,\overline{X})+ e(X,\overline{A})+ e(\overline{X},A) -\sum_{z\in \overline{A}} \frac{d_{X}(z)^2}{d_{N_1}(z)} \right] 
        \\ \label{seq} \ge\ &\frac{1}{4}(d-3)\left( \varepsilon^2|X| +|\overline{X}|\right)+\frac{1}{4}  e(\overline{X},N_2) -\frac{1}{2}\left( \varepsilon|X|+ |\overline{X}| \right)^2.
    \end{align}

    \noindent\textbf{Case 3.} We have $|\overline{X}|=1$.

    Set $\overline{X}:=\{y\}$. By the selection of $x$, we know that $x$ is the only neighbor of $y$ in $L$. 
    Hence, we have 
    \begin{equation}\label{eq6}
        e(X,\overline{X})=e(\overline{X},A)=0.
    \end{equation}
    In addition, Claim \ref{claim} shows that
    \begin{equation}\label{eq7}
        e(\overline{X},N_2)\ge \delta-1.
    \end{equation}
    For a vertex $z$ in $\overline{A}$, since $d_{X}(z)$ equals 0 or 1, we have $d_{X}(z)^2=d_{X}(z)$
    and $d_{N_1}(z)= d_{X}(z)+d_{\overline{X}}(z)\le 2$. It follows that
    \begin{equation}\label{eq8}
        \sum_{z\in \overline{A}} \frac{d_{X}(z)^2}{d_{N_1}(z)}=\sum_{z\in \overline{A}} \frac{d_{X}(z)}{d_{N_1}(z)}\ge \sum_{z\in \overline{A}} \frac{d_{X}(z)}{2}= \frac{1}{2}e(X,\overline{A}).
    \end{equation}
    Substituting \eqref{eq6}, \eqref{eq7}, and \eqref{eq8} into \eqref{seq} yields
    \begin{align}\label{eq9}
        \frac{1}{2}(1-\varepsilon)^2 e(X,\overline{A}) \ge\ \frac{1}{4}(d-3)\left( \varepsilon^2(d-1) +1\right)+\frac{1}{4}  (\delta-1) -\frac{1}{2}\left( \varepsilon(d-1)+ 1 \right)^2.
    \end{align}

    By the assumption that $\kappa'\le \delta-2$, we have
    \begin{equation}\label{eq10}
        e(X,\overline{A})\le e(L,R)-e(\{x\},R) =\kappa'-1\le \delta-3.
    \end{equation}
    If $d\ge \delta+1$, let us take $\varepsilon=0$. Then, \eqref{eq9} gives 
    $$e(X,\overline{A})\ge \frac{1}{2}(d-3)+\frac{1}{2}(\delta-1)-1\ge \delta-\frac{5}{2},$$
    which is contradictory to \eqref{eq10}. 
    Now, assume that $d=\delta$. Substituting \eqref{eq10} into \eqref{eq9}, we deduce that
    \begin{align}\label{eq11}
        \frac{1}{2}(1-\varepsilon)^2 (\delta-3) \ge\ \frac{1}{4}(\delta-3)\left( \varepsilon^2(\delta-1) +1\right)+\frac{1}{4}  (\delta-1) -\frac{1}{2}\left( \varepsilon(\delta-1)+ 1 \right)^2.
    \end{align}
    Elementary calculation shows that \eqref{eq11} does not hold for 
    $$-\frac{8}{\delta^2+2\delta-7}< \varepsilon<0,$$
    which is a contradiction. Now, we complete the case of $|\overline{X}|=1$.

    \noindent\textbf{Case 4.} We have $|\overline{X}|=2$.

    Let $\overline{X}:=\{y_1,y_2\}$ and let $c:=e(L,\overline{X})-2$. Note that $c=e(X,\overline{X})+ e(\overline{X},A)$. We have the following estimate for $e(\overline{X},N_2)$:
    \begin{align}\notag
        e(\overline{X},N_2)&=\sum_{i=1}^2 d_{N_2}(y_i) =\sum_{i=1}^2 \left( d(y_i)-1-d_X(y_i)- d_{\overline{X}}(y_i)\right)
        \\ \label{eq12} &\ge \sum_{i=1}^2 \left( \delta-1-d_X(y_i)- 1 \right)\ge 2\delta-4-c.
    \end{align}
    By the selection of $x$, any vertex in $\overline{A}$ has at most two neighbors in $X$.
    Let $a$ be the number of vertices in $\overline{A}$ with exactly one neighbor in $X$, and let $b$ be the number of vertices in $\overline{A}$ with exactly two neighbors in $X$. 
    Then, $e(X,\overline{A})=a+2b$. For any vertex $z\in \overline{A}$, since $|\overline{X}|=2$, we have $d_{N_1}(z)\le d_{X}(z)+2$. It follows that
    \begin{align}\label{eq13}
        \sum_{z\in \overline{A}} \frac{d_{X}(z)^2}{d_{N_1}(z)} \ge \sum_{z\in \overline{A}} \frac{d_{X}(z)^2}{d_{X}(z)+2}=\frac{a}{3}+b\ge \frac{1}{3} e(X,\overline{A}).
    \end{align}
    Substituting \eqref{eq12}, \eqref{eq13}, and the fact that $c=e(X,\overline{X})+ e(\overline{X},A)$ into \eqref{seq}, we derive
    \begin{align}\notag
        (1-\varepsilon)^2\left[  \frac{2}{3} e(X,\overline{A})+ c  \right] \ge\ \frac{1}{4}(d-3)\left( \varepsilon^2(d-2) +2\right)+\frac{1}{4}  (2\delta-4-c) -\frac{1}{2}\left( \varepsilon(d-2)+ 2 \right)^2.
    \end{align}
    Note that $e(X,\overline{A})\le e(L,R)-c-2= \kappa'-c-2\le \delta-c-4$. We have
    \begin{align}\notag
        \frac{1}{3}(1-\varepsilon)^2\left( 2\delta+ c-8  \right) \ge\ \frac{1}{4}(d-3)\left( \varepsilon^2(d-2) +2\right)+\frac{1}{4}  (2\delta-4-c) -\frac{1}{2}\left( \varepsilon(d-2)+ 2 \right)^2.
    \end{align}
    Taking $\varepsilon=-1/(d-2)$ yields
    \begin{align}\label{seq1}
        \frac{1}{3}\left(\frac{d-1}{d-2}\right)^2\left( 2\delta+ c-8  \right) \ge\ \frac{(d-3)(2d-3)}{4(d-2)}+\frac{1}{4}  (2\delta-4-c) -\frac{1}{2},
    \end{align}
    which implies that
    \begin{align}\label{seq2}
        \frac{1}{3}\left(\frac{\delta-1}{\delta-2} \right)^2\left( 2\delta+ c-8  \right) \ge\ \frac{(\delta-3)(2\delta-3)}{4(\delta-2)}+\frac{1}{4}  (2\delta-4-c) -\frac{1}{2}.
    \end{align}
    
    By the selection of $x$, we find
    $c=d_{L}(y_1)+d_{L}(y_2)-2\le 2+2-2=2$. However, elementary calculation shows that \eqref{seq2} does not hold when $c$ equals 0 or 1. Thus, we have $c=2$. Then, \eqref{seq2} gives $\delta\le 7$. Since $\delta\ge \kappa'+2=e(L,R)+2\ge c+4=6$, we know that $\delta$ equals 6 or 7.

    Since $c=2$, both $y_1$ and $y_2$ have exactly two neighbors in $L$. Let $z_1$ and $z_2$ be the neighbors of $y_1$ and $y_2$ in $L\backslash \{x\}$, respectively. We next show that $z_1=z_2$. Otherwise, assume $z_1\ne z_2$. Note that $y_1$ is a vertex in $R$ with $d_L(y_1)=\Delta_{L,R}$. By the same method that we use to prove $c=2$, we may prove that $e(R,\{ x,z_1 \})-2=2$. That is, $d_{R}(z_1)=2$. Similarly, we have $d_{R}(z_2)=2$. It follows that $\delta\ge \kappa'+2=e(L,R)+2\ge d_{R}(x)+d_{R}(z_1)+d_{R}(z_2) +2=8$, which is contradictory to $\delta\le 7$. Thus, we have $z_1=z_2$. For simplicity, set $z_0:=z_1=z_2$.

    Now, we divide case 4 into two subcases according to the value of $\delta$.

    \noindent\textbf{Subcase 1.} We have $\delta =6$.
    
    Since $e(L,R)=\kappa'\le \delta-2=4$, there are exactly four edges between $L$ and $R$, namely $xy_1,xy_2,z_0y_1$, and $z_0y_2$. Then, $e(X,\overline{A})=0$. Substituting $\delta=6$ and $c=2$ into \eqref{seq1} yields $d\le 6$, which implies $d=6$. By symmetry, we have $d(z_0)=d(y_1)=d(y_2)=6$. If $x$ is not adjacent to $z_0$, it follows by \eqref{eq12} that
        $$e(\overline{X},N_2) \ge \sum_{i=1}^2 \left( \delta-1-d_X(y_i)- 1 \right)=2(\delta-2)=8.$$
    By \eqref{seq}, we derive
    $$2(1-\varepsilon)^2\ge \frac{3}{4}(4\varepsilon^2+2) +2-\frac{1}{2}(4\varepsilon+2)^2,$$
    which is not true for $\varepsilon =-2/7$. Now, assume that $x$ is adjacent to $z_0$. Similarly, we may assume that $y_1$ is adjacent to $y_2$.
    
    Let $g: V\to R$ be the function defined as follows:
    \begin{center}
    $g(z)=\begin{cases}
       0, &{\rm if}\ z\in \{ x,z_0 \};\\
       1, &{\rm if}\ z\in \overline{X};\\
       2, &{\rm if}\ z\in R\backslash \overline{X};\\
       -1/2, &{\rm if}\ z\in L\backslash \{ x,z_0 \}.
    \end{cases}$
    \end{center}
    Applying Proposition \ref{PH} with $K=0$ and $d(x)=6$, we have
    \begin{align}\notag
        \sum_{y\in N_1}\sum_{\substack{z\in N_2 \\ z\sim y}}\left[ \frac{1}{4}(g(z)-g(y))^2-\frac{1}{2} (g(z)-g(y))(g(y)-g(x)) \right] +\sum_{\substack{yy'\in E\\ y,y'\in N_1}} (g(y)-g(y'))^2
        \\ \notag \ge  \frac{3}{2} \Gamma (g)(x) -\frac{1}{2}(\Delta g(x))^2.
    \end{align}
    Note that we have interchanged the order of summation. Elementary calculation gives
    $$\Gamma(g)(x)=\frac{11}{8},\ \Delta g(x)=\frac{1}{2},\ \sum_{\substack{yy'\in E\\ y,y'\in N_1}} (g(y)-g(y'))^2=\frac{1}{4}d_X (z_0)+2,$$
    and
    $$\sum_{y\in N_1}\sum_{\substack{z\in N_2 \\ z\sim y}}\left[ \frac{1}{4}(g(z)-g(y))^2-\frac{1}{2} (g(z)-g(y))(g(y)-g(x)) \right] =\frac{1}{16}d_A (z_0)-\frac{3}{2}.$$
    Therefore, we arrive at
    $$\frac{1}{16}d_A (z_0)-\frac{3}{2}+\frac{1}{4}d_X (z_0)+2\ge \frac{33}{16}-\frac{1}{8}.$$
    Noting that $d_A (z_0)+d_X (z_0)=3$, we see that the above inequality cannot hold.

    \noindent\textbf{Subcase 2.} We have $\delta =7$.

    Substituting $\delta=7$ and $c=2$ into \eqref{seq1} yields $d\le 7$, which implies $d=7$. By symmetry, we have $d(z_0)=d(y_1)=d(y_2)=7$. Using \eqref{eq12}, we have $e(\overline{X},N_2) \ge 8$. If $e(X,\overline{A})=0$, it follows by \eqref{seq} that
    $$2(1-\varepsilon)^2\ge (5\varepsilon^2+2)+2
    -\frac{1}{2}(5\varepsilon+2)^2,$$
    which dose not hold when $\varepsilon=-1/2.$ Now, assume that $e(X,\overline{A})> 0$. However, since $e(L,R)\le \delta-2=5$, we have $e(X,\overline{A})=1$. Let $w_1w_2$ be the edge with $w_1\in X$ and $w_2\in \overline{A}$. 
    There are exactly five edges between $L$ and $R$, namely $xy_1,xy_2,z_0y_1,z_0y_2$, and $w_1w_2$.
    By the same method that we use to prove $e(X,\overline{A})=1$, we know that $e(N_1(z_0)\cap L, N_2(z_0)\cap R)=1$. This implies that $z_0$ is adjacent to $w_1$. Similarly, we deduce that both $y_1$ and $y_2$ are adjacent to $w_2$.

    Set $W:=N_1(w_1)\cap L$, $\overline{W}:=N_1(w_1)\cap R$, $B:=N_2(w_1)\cap L$, and $\overline{B}:=N_2(w_1)\cap R$. According to Lemma \ref{lemma1}, we derive
    \begin{align}\notag
        &(1-\varepsilon)^2\left[ e(W,\overline{W})+ e(W,\overline{B})+ e(\overline{W},B) -\sum_{z\in \overline{B}} \frac{d_{W}(z)^2}{d_{N_1(w_1)}(z)} \right] 
        \\ \notag \ge\ &\frac{1}{4}(d(w_1)-3)\left( \varepsilon^2|W| +|\overline{W}|\right)+\frac{1}{4} e\left(\overline{W},N_2(w_1)\right) -\frac{1}{2}\left( \varepsilon|W|+ |\overline{W}| \right)^2.
    \end{align}
    Note that $e(W,\overline{W})=e(\overline{W},B)=0$, $e(W,\overline{B})=4$,  $e\left(\overline{W},N_2(w_1)\right)\ge \delta -1=6$, and
    $$\sum_{z\in \overline{B}} \frac{d_{W}(z)^2}{d_{N_1(w_1)}(z)} =\sum_{i=1}^2 \frac{d_{W}(y_i)^2}{d_{N_1(w_1)}(y_i)}=2\times \frac{2^2}{3}=\frac{8}{3}.$$
    We conclude that
    \begin{align}\notag
        \frac{4}{3}(1-\varepsilon)^2 \ge\frac{1}{4}(d(w_1)-3)\left( \varepsilon^2 \left( d(w_1)-1\right) +1 \right)+\frac{3}{2} -\frac{1}{2}\left( \varepsilon\left( d(w_1)-1\right)+ 1 \right)^2.
    \end{align}
    Taking $\varepsilon=0$ leads to
    \begin{equation*}
        \frac{4}{3} \ge\frac{1}{4}(d(w_1)-3)+\frac{3}{2} -\frac{1}{2},
    \end{equation*}
    which contradicts $d(w_1)\ge \delta=7$. We complete  Subcase 2 and establish Theorem \ref{BE}.
\end{proof}

\section{Edge-connectivity of amply regular graphs}

\begin{proof}[Proof of Theorem \ref{ARG}]
    Let $G=(V,E)$ be an amply regular graph with parameters $(n,d,\alpha,\beta)$ such that $\beta\ge 2$.
    If $d=2$, then $G$ is a quadrangle, and our proof is done. Now, assume that $d\ge 3$.
    Let $\kappa'$ be the edge-connectivity of $G$, and let $L,R$ be a partition of $V$ with $e(L,R)=\kappa'$.
    For any vertex $z$, let $d_{L,R}(z)$ be $d_{L}(z)$ when $z\in R$ and be $d_{R}(z)$ when $z\in L$. Let $$\Delta_{L,R}:=\max\{ d_{L,R}(z) |z\in V \}.$$
    Note that our aim is to show that $\Delta_{L,R}=d$. For a contradiction, we assume that $\Delta_{L,R}\le d-1$.

    Our first goal is to show the following claim. Recall that we always assume $\beta\geq 2$.

    \begin{claim}\label{claim2}
        If $\beta\ge \min\{3,\alpha\}$, then for any vertex $z$, we have $d_{L,R}(z)=0$ or $d_{L,R}(z)\ge \beta$. 
    \end{claim}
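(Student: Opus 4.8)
The plan is to argue by contradiction. Suppose some vertex violates the claim; by the symmetry between $L$ and $R$ we may take $z\in L$ with $1\le d_R(z)=t\le \beta-1$, and among all such vertices I would choose $z$ so that its crossing degree $t$ is as small as possible (a minimality I expect to use to close the argument). The cut hypothesis enters only through $\Delta_{L,R}\le d-1$, which says that every vertex has at least one neighbor on its own side; applied to a vertex $y\in R$ it gives $d_R(y)=d-d_L(y)\ge 1$, so $y$ always reaches back into $R$.

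The first step is to fix a crossing neighbor $y\in N_1(z)\cap R$ and analyze the $\alpha$ common neighbors of the adjacent pair $z\sim y$. Splitting them according to the partition, the ones lying in $R$ are neighbors of $z$ in $R$ distinct from $y$, hence at most $t-1$ of them; consequently at least $\alpha-t+1$ common neighbors lie in $L$, each being a neighbor of $y$ that crosses the cut. The second step is to use the own-side neighbor $u\in N_1(y)\cap R$ produced by $\Delta_{L,R}\le d-1$: either $u\sim z$, in which case $u$ is a further neighbor of $z$ in $R$, or $d(z,u)=2$ and I apply the amply regular hypothesis to the distance-two pair $z,u$. In the latter case their $\beta$ common neighbors split so that the $R$-part consists of neighbors of $z$ in $R$ (at most $t$ of them, with $y$ among them), forcing at least $\beta-t\ge 1$ common neighbors into $L$, each again crossing the cut at $u$.

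Combining or iterating these two counts, I would either directly collect $\beta$ distinct neighbors of $z$ in $R$, contradicting $t\le\beta-1$, or produce a vertex whose positive crossing degree is strictly smaller than $t$, contradicting the minimal choice of $z$. The hypothesis $\beta\ge\min\{3,\alpha\}$ enters precisely here: since $\beta\ge 2$ always holds, this assumption excludes only the regime $\beta=2,\ \alpha\ge 3$, so I would split into the automatic case $\beta\ge 3$ and the case $\beta=2$ with $\alpha\le 2$ and verify that the above counts close in each. This split is consistent with the failure of the conclusion when $\beta=1$ (the infinite path and Figure \ref{ARGexample}), which reassures me that the numerical relation between $\alpha$ and $\beta$ is genuinely doing the work.

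The main obstacle is controlling the \emph{leakage} of common neighbors onto the $L$-side. The distance-two common neighbors I want to harvest to build up $d_R(z)$ can instead fall into $L$, appearing as neighbors of $u$ crossing the cut rather than as new elements of $N_1(z)\cap R$. Bounding this leakage—using the estimate $\alpha-t+1$ on the $L$-side of the $z\sim y$ count, the minimality of $t$, and the numerical constraint $\beta\ge\min\{3,\alpha\}$—and making the bookkeeping of how many common neighbors must augment $N_1(z)\cap R$ versus how many may cross airtight, so that both regimes are handled uniformly, is the crux of the argument.
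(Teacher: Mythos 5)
Your setup coincides with the paper's: take a vertex of minimal positive crossing degree $t\le\beta-1$, say $z\in L$ with crossing neighbor $y\in R$, use $\Delta_{L,R}\le d-1$ to get $N_1(y)\cap R\ne\emptyset$, and for $u\in N_1(y)\cap R$ with $d_L(u)=0$ observe that all $\beta$ common neighbors of the distance-two pair $z,u$ lie in $R$, forcing $d_R(z)\ge\beta$. But the argument you leave as ``bookkeeping'' is in fact the bulk of the proof, and the dichotomy you propose does not close as stated. Your counts only give \emph{lower} bounds $d_L(u)\ge\beta-t\ge 1$; a lower bound can never contradict the minimality of $t$, so the branch ``produce a vertex with strictly smaller positive crossing degree'' is not available. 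Moreover, your assertion that the cut hypothesis enters only through $\Delta_{L,R}\le d-1$ is wrong: the indispensable ingredient is the global bound $e(L,R)=\kappa'\le d$ (edge-connectivity at most the degree), which, combined with $d_L(u)\ge 1$ for every $u\in N_1(y)\cap R$, forces $d_L(u)=1$ for all such $u$, $d_R(z)=1$, and the identification of $\{y\}\cup(N_1(y)\cap R)$ as exactly the $R$-vertices meeting $L$.

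Even after that, one is not done: the paper must then run a delicate structural analysis --- pick $z_3\in N_1(y)\cap R$ with unique $L$-neighbor $z_4$, show $z$ and $z_3$ have at most two common neighbors so that $\beta=2$ and $z\sim z_4$, invoke the hypothesis $\beta\ge\min\{3,\alpha\}$ to conclude $\alpha\le 2$, rule out neighbors of $z_3$ in $R$ without $L$-neighbors by exhibiting three common neighbors of two vertices, and finally derive $\alpha\ge 1$ and $\alpha=0$ simultaneously. None of this appears in your plan, and it is precisely where the hypothesis on $\alpha,\beta$ does its work (you correctly locate the dangerous regime $\beta=2$, $\alpha\ge 3$, but do not handle it). Your ``first step'' (at least $\alpha-t+1$ common neighbors of the adjacent pair $z,y$ falling into $L$) is really the content of the paper's Claim \ref{claim3} and is not needed for Claim \ref{claim2}. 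As it stands the proposal is an accurate opening but has a genuine gap at the decisive stage.
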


    \begin{proof}
        Let $z_1$ be a vertex with $d_{L,R}(z_1)\ge 1$ such that, for any vertex $z$ with $d_{L,R}(z)\ge 1$, we have $d_{L,R}(z_1)\le d_{L,R}(z)$.
        We assume for the sake of contradiction that $d_{L,R}(z_1)\le \beta -1$. 
        Without loss of generality, assume that $z_1\in L$. Let $z_2$ be a neighbor of $z_1$ in $R$. 

        Since $d_L(z_2)\le \Delta_{L,R}\le d-1$, we have $N_1(z_2)\cap R \ne \emptyset$.
        For any vertex $z\in N_1(z_2)\cap R$, if $d_L(z)=0$, then $z_1$ and $z$ have exactly $\beta$ common neighbors and all the $\beta$ common neighbors lie in $R$, which contradicts the assumption that $d_{R}(z_1)\le \beta -1$. 
        Thus, we have $d_L(z)\ge 1$ for any $z\in N_1(z_2)\cap R$. 
        If there is a vertex $z_0\in N_1(z_2)\cap R$ such that $d_L(z_0)\ge 2$, then
        \begin{align}\notag
            e(L,R) &\ge d_L(z_2)+d_L(z_0)+ \sum_{z\in (N_1(z_2)\cap R)\backslash \{z_0\}}d_L(z)
            \\ \notag &\ge d_L(z_2)+2+(d_{R}(z_2)-1)=d+1,
        \end{align}
        which is contradictory to $e(L,R)=\kappa'\le d$. Thus, we have $d_L(z)= 1$ for any $z\in N_1(z_2)\cap R$. By the selection of $z_1$, we also have $d_R(z_1)= 1$. Since
        $$d\ge e(L,R)\ge d_L(z_2)+ \sum_{z\in N_1(z_2)\cap R}d_L(z)=d_L(z_2)+d_R(z_2)=d,$$
        we deduce that $\{z_2\}\cup (N_1(z_2)\cap R)$ are all the vertices in $R$ with at least one neighbor in $L$.

        Fix a vertex $z_3\in N_1(z_2)\cap R$. Let $z_4$ be the neighbor of $z_3$ in $L$. Note that $z_1$ and $z_3$ have at most two common neighbors, namely $z_2$ and $z_4$. It follows that $\beta =2$ and $z_1$ is adjacent to $z_4$. By the assumption of this Claim, we have $\alpha\le 2$.
        If $z_3$ has a neighbor $z_5$ in $R$ such that $z_5$ has no neighbors in $L$, then $z_4$ and $z_5$ has $\beta =2$ common neighbors. 
        Let $z_6$ be the common neighbor of $z_4$ and $z_5$, distinct from $z_3$. 
        Since $z_6$ has a neighbor $z_4$ in $L$, we have $z_6\in \{z_2\}\cup (N_1(z_2)\cap R)$. Since $z_5$ has no neighbors in $L$, we have $z_6\ne z_2$, and hence $z_6\in N_1(z_2)\cap R$. Then, $z_3$ and $z_6$ have at least 3 common neighbors ($z_2$,$z_4$, and $z_5$), which contradicts $2=\beta\ge\alpha$. Therefore, each neighbor of $z_3$ in $R$ has at least one neighbor in $L$. It follows that $N_1(z_3)\subset \{z_4,z_2\}\cup ((N_1(z_2)\cap R)\backslash \{z_3\})$. Then,
        \begin{equation*}
            d=|N_1(z_3)|\le |\{z_4,z_2\}\cup ((N_1(z_2)\cap R)\backslash \{z_3\})|=2+(d-d_L(z_2)-1)\le d,
        \end{equation*}
        which implies that $d_L(z_2)=1$ and $z_3$ is adjacent to every vertex in $(N_1(z_2)\cap R)\backslash \{z_3\}$. Since $|(N_1(z_2)\cap R)\backslash \{z_3\}|=d-2\ge 1$, we can take a vertex $z_7\in (N_1(z_2)\cap R)\backslash \{z_3\}$. Then $z_7$ is a common neighbor of $z_2$ and $z_3$, which implies $\alpha\ge 1$. However, since $d_R(z_1)=d_L(z_2)=1$, $z_1$ and $z_2$ have no common neighbors, which is a contradiction.
    \end{proof}

    Without loss of generality, assume that there is a vertex $x\in L$ with $d_R(x)=\Delta_{L,R}$. Set $X:=N_1(x)\cap L$ and $\overline{X}:=N_1(x)\cap R$.
    Let us divide $X$ into two parts $X_1$ and $X_2$, where $X_1:=\{z\in X| d_R(z)=0\}$ and $X_2:=\{z\in X| d_R(z)\ge 1\}$. Next, we prove the following claims.

    \begin{claim}\label{claim3}
        For any vertex $z$, we have $d_{L,R}(z)=0$ or $d_{L,R}(z)\ge \alpha +2-|\overline{X}|$. 
    \end{claim}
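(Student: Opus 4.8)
The plan is to argue purely combinatorially, using only the defining property that adjacent vertices have exactly $\alpha$ common neighbors together with the fact that $\Delta_{L,R}=|\overline{X}|$ is, by definition, the maximum of $d_{L,R}$ over \emph{all} vertices. I would begin by fixing an arbitrary vertex $z$ with $d_{L,R}(z)\ge 1$ and selecting one cross-neighbor $w$ of $z$, so that $z$ and $w$ lie on opposite sides of the partition. Since the bound $d_{L,R}(\cdot)\le\Delta_{L,R}=|\overline{X}|$ applies uniformly to vertices of both $L$ and $R$, the two sides are interchangeable in what follows, and I may assume $z\in L$ and $w\in R$.

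Because $z\sim w$, they have exactly $\alpha$ common neighbors, and the key step is to split these according to which side of the partition they fall in: let $\alpha_L$ be the number of common neighbors lying in $L$ and $\alpha_R$ the number lying in $R$, so that $\alpha_L+\alpha_R=\alpha$. Each of the $\alpha_L$ common neighbors in $L$ is a neighbor of $w$ inside $L$, and together with $z$ itself they yield $d_L(w)\ge\alpha_L+1$. Since $w\in R$, we have $d_L(w)=d_{L,R}(w)\le|\overline{X}|$, whence $\alpha_L\le|\overline{X}|-1$. Dually, each of the $\alpha_R$ common neighbors in $R$ is a neighbor of $z$ inside $R$, and together with $w$ they yield
\[
d_{L,R}(z)=d_R(z)\ge\alpha_R+1=\alpha-\alpha_L+1\ge\alpha+2-|\overline{X}|,
\]
which is exactly the claimed bound.

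I expect no serious obstacle here: the statement follows from a single counting argument once the common neighbors of the cross-edge $zw$ are partitioned by side. The only point demanding care is the reduction to the case $z\in L$, which must be justified not by an arbitrary relabeling of $L$ and $R$ but by observing that $\Delta_{L,R}$ controls the cross-degree uniformly on both sides, so that the symmetric argument (interchanging the roles of $z$ and $w$) produces the identical inequality when $z\in R$. Note in particular that this claim, unlike Claim \ref{claim2}, invokes neither curvature nor the hypothesis $\beta\ge\min\{3,\alpha\}$; it rests solely on the amply regular parameter $\alpha$ and the extremality encoded in $\Delta_{L,R}$.
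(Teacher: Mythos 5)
Your argument is correct and is essentially the paper's own proof: the authors likewise take a cross-neighbor $z'$ of $z$, count the $\alpha$ common neighbors by side to get $d_R(z)+d_L(z')\ge\alpha+2$, and then bound $d_L(z')\le\Delta_{L,R}=|\overline{X}|$. Your splitting into $\alpha_L+\alpha_R$ is just a re-phrasing of their edge-counting, and your justification of the $z\in L$ reduction matches their use of the fact that $\Delta_{L,R}$ bounds the cross-degree on both sides.
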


    \begin{proof}
        Let $z$ be a vertex with $d_{L,R}(z)\ge 1$. If $z\in L$, let $z'$ be a neighbor of $z$ in $R$. For any $w\in N_1(z)\cap N_1(z')$, let $e_w$ be $zw$ when $w\in R$ and be $z'w$ when $w\in L$. Then, we find
        $$d_R(z)+d_L(z')\ge |\{e_w|w\in N_1(z)\cap N_1(z')\}|+2=\alpha+2,$$
        which implies $d_R(z)\ge \alpha +2-d_L(z')$. By the selection of $x$, we have $d_L(z')\le |\overline{X}|$. It follows that $d_{R}(z)\ge \alpha +2-|\overline{X}|$. Similarly, if $z\in R$, then we have $d_{L}(z)\ge \alpha +2-|\overline{X}|$.
    \end{proof}

    \begin{claim}\label{claim4}
        We have $|\overline{X}|\ge \left\lceil \frac{\alpha}{2} \right\rceil+1$. 
    \end{claim}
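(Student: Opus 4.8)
The plan is to deduce Claim \ref{claim4} immediately from Claim \ref{claim3}, applied to the distinguished vertex $x$ itself. First I would record what $d_{L,R}(x)$ is: by construction $d_{L,R}(x) = d_R(x) = |\overline{X}| = \Delta_{L,R}$, and this quantity is strictly positive. Indeed, since $G$ is connected the edge-connectivity satisfies $\kappa' = e(L,R) \geq 1$, so at least one edge crosses the cut, forcing $\Delta_{L,R} \geq 1$. Consequently the degenerate alternative ``$d_{L,R}(x) = 0$'' in the dichotomy of Claim \ref{claim3} is ruled out for the choice $z = x$.

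Applying Claim \ref{claim3} to $x$ then forces the nontrivial branch, namely $|\overline{X}| = d_R(x) \geq \alpha + 2 - |\overline{X}|$. Rearranging gives $2|\overline{X}| \geq \alpha + 2$, that is $|\overline{X}| \geq \frac{\alpha}{2} + 1$. Finally, since $|\overline{X}|$ is an integer, I would upgrade this bound via the identity $\lceil \frac{\alpha}{2} + 1 \rceil = \lceil \frac{\alpha}{2} \rceil + 1$ (valid because $1$ is an integer), obtaining the asserted inequality $|\overline{X}| \geq \lceil \frac{\alpha}{2} \rceil + 1$.

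I do not expect any genuine obstacle here: all the combinatorial content is already carried by Claim \ref{claim3}, and the only points needing a moment of care are verifying $d_R(x) \neq 0$ (so that the nontrivial branch of Claim \ref{claim3} is the operative one) and the integrality upgrade. If a self-contained derivation avoiding the explicit appeal to Claim \ref{claim3} were preferred, the same bound follows by fixing any $y \in \overline{X}$ and splitting the $\alpha$ common neighbors of the adjacent pair $x, y$, all of which lie in $N_1(x) = X \cup \overline{X}$, into two groups: those in $\overline{X}$, of which there are at most $|\overline{X}| - 1$ (since $y$, though in $\overline{X}$, is not among them), and those in $X$, each of which is a neighbor of $y$ in $L$ distinct from $x$, so by the maximality $d_L(y) \leq \Delta_{L,R} = |\overline{X}|$ there are at most $|\overline{X}| - 1$ of them. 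Adding the two counts yields $\alpha \leq 2(|\overline{X}| - 1)$, reproducing $|\overline{X}| \geq \frac{\alpha}{2} + 1$ and hence the claim.
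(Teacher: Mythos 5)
Your proposal is correct and follows exactly the paper's route: the paper likewise derives Claim \ref{claim4} in one line by applying Claim \ref{claim3} to the vertex $x$ itself, obtaining $d_R(x)\ge \alpha+2-|\overline{X}|$ and hence $|\overline{X}|\ge \frac{\alpha}{2}+1$, with the ceiling following from integrality. Your extra checks (that $d_R(x)=\Delta_{L,R}\ge 1$ rules out the degenerate branch) and the alternative direct count of common neighbors are both sound but add nothing beyond the paper's argument.
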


    \begin{proof}
        According to Claim \ref{claim3}, we have $d_{R}(x)\ge \alpha +2-|\overline{X}|$. That is, $|\overline{X}|\ge  \frac{\alpha}{2} +1$.
    \end{proof}

    \begin{claim}\label{claim5}
        For any vertex $y\in \overline{X}$, we have $$d_{L}(y)\ge \frac{|X_1|(\beta -1)}{\max\{\alpha,\beta\}-1}+1.$$
    \end{claim}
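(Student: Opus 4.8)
The plan is to fix a vertex $y\in\overline{X}$, set $S:=N_1(y)\cap L$ (so that $|S|=d_L(y)$, and note $x\in S$ because $x\sim y$ and $x\in L$), and to count in two different ways the edges between $X_1$ and $S\setminus\{x\}$. The target inequality is equivalent to $|X_1|(\beta-1)\le (d_L(y)-1)(\max\{\alpha,\beta\}-1)$, so a clean double count should deliver it directly.

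For the first count, I would look at a vertex $z\in X_1$. Since $z\in X_1$ has no neighbor in $R$, all of its neighbors lie in $L$; in particular $z$ is not adjacent to $y\in R$, while $x$ is a common neighbor of $z$ and $y$, so $z$ and $y$ are at distance exactly two. Amply regularity then gives $z$ and $y$ precisely $\beta$ common neighbors, and all of them lie in $N_1(y)\cap L=S$ because every neighbor of $z$ lies in $L$. One of these $\beta$ common neighbors is $x$, so $z$ has exactly $\beta-1$ neighbors in $S\setminus\{x\}$. Summing over all $z\in X_1$, the number of edges between $X_1$ and $S\setminus\{x\}$ equals $|X_1|(\beta-1)$.

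For the second count, I would bound $d_{X_1}(w)$ for each $w\in S\setminus\{x\}$. Because $w\sim y\sim x$ and $w\neq x$, the vertex $w$ is either adjacent to $x$ (hence shares $\alpha$ common neighbors with $x$) or at distance two from $x$ (hence shares $\beta$ common neighbors with $x$); in either case $w$ has at most $\max\{\alpha,\beta\}$ neighbors in $N_1(x)\supseteq X_1$. The crucial sharpening is that $y$ is itself a common neighbor of $w$ and $x$ (since $w\sim y$ and $x\sim y$) and $y\in R$, so $y\notin X_1\subseteq L$; this removes one admissible common neighbor from the count and yields $d_{X_1}(w)\le \max\{\alpha,\beta\}-1$. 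Summing over the $|S|-1=d_L(y)-1$ vertices $w\in S\setminus\{x\}$ bounds the edge count above by $(d_L(y)-1)(\max\{\alpha,\beta\}-1)$.

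Combining the two counts gives $|X_1|(\beta-1)\le (d_L(y)-1)(\max\{\alpha,\beta\}-1)$, and since $\max\{\alpha,\beta\}-1\ge \beta-1\ge 1$ as $\beta\ge 2$, dividing and rearranging produces exactly $d_L(y)\ge \frac{|X_1|(\beta-1)}{\max\{\alpha,\beta\}-1}+1$. I expect the only delicate point to be the factor $\max\{\alpha,\beta\}-1$ rather than $\max\{\alpha,\beta\}$ in the denominator: the naive adjacency bound only gives $\max\{\alpha,\beta\}$, and recovering the stated (stronger) estimate rests entirely on identifying $y$ as a common neighbor of $w$ and $x$ that is forced out of $X_1$ by lying in $R$.
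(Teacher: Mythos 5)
Your double count of the edges between $X_1$ and $(N_1(y)\cap L)\setminus\{x\}$ is correct and is essentially identical to the paper's argument, which counts the same pairs $(w,z)$ with $w\in X_1$ and $z\in (N_1(y)\cap N_1(w))\setminus\{x\}$, obtaining the same lower bound $|X_1|(\beta-1)$ and the same upper bound $(d_L(y)-1)(\max\{\alpha,\beta\}-1)$. You also correctly identified the one delicate point — that $y$ itself is a common neighbor of $w$ and $x$ lying in $R$, which yields the $-1$ in the denominator — exactly as the paper does.
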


    \begin{proof}
        Fix a vertex $y\in \overline{X}$. Let $M$ be the number of pairs $(w,z)$ such that $w\in X_1$ and $z\in (N_1(y)\cap N_1(w))\backslash \{x\}$. On one hand, for any $w\in X_1$, $w$ and $y$ have exactly $\beta -1$ common neighbors in addition to $x$. Thus,
        \begin{equation}\label{5.1}
            M=|X_1|(\beta -1).
        \end{equation}
        On the other hand, for any $z\in N_1(y)\backslash \{x\}$, let us consider the number of neighbors of $z$ in $X_1$. If $z\in R$, by the definition of $X_1$, $z$ has no neighbors in $X_1$. If $z\in L$, since $x$ and $z$ have at most $\max\{\alpha,\beta\}-1$ common neighbors in addition to $y$, $z$ has at most $\max\{\alpha,\beta\}-1$ neighbors in $X_1$. It follows that
        \begin{equation}\label{5.2}
            M\le (d_{L}(y)-1)(\max\{\alpha,\beta\}-1).
        \end{equation}
        The desired result then follows by combining \eqref{5.1} and \eqref{5.2}.
    \end{proof}

    Now, we are prepared to prove Theorem \ref{ARG}. Let us divide our proof into 3 cases.

\noindent\textbf{Case 1.} We have $\alpha\ge\beta\ge 3$.

    According to Claim \ref{claim5}, we have
    \begin{equation}\label{5.3}
        d\ge e(L,R)\ge \sum_{y\in \overline{X}}d_{L}(y) \ge |\overline{X}|\left( \frac{|X_1|(\beta -1)}{\alpha-1}+1 \right)\ge |\overline{X}|\left( \frac{2|X_1|}{\alpha-1}+1 \right).
    \end{equation}
    Moreover, by Claim \ref{claim2}, we arrive at 
    \begin{equation*}
        d\ge e(L,R)\ge d_R(x)+ \sum_{z\in X_2}d_R(z)\ge |\overline{X}|+\beta|X_2|\ge |\overline{X}|+3|X_2| =3d-2|\overline{X}|-3|X_1|,
    \end{equation*}
    which implies $|X_1|\ge 2(d-|\overline{X}|)/3$. Substituting it into \eqref{5.3} yields
    \begin{equation*}
        d\ge |\overline{X}|\left( \frac{4(d-|\overline{X}|)}{3(\alpha-1)}+1 \right).
    \end{equation*}
    Recall that $|\overline{X}|=\Delta_{L,R} \le d-1$. Upon dividing by $d-|\overline{X}|$, we obtain
    \begin{equation}\label{5.4}
        |\overline{X}|\le \frac{3(\alpha -1)}{4}.
    \end{equation}
    Applying Claim \ref{claim3}, we have
    \begin{align}\notag
        d\ge e(L,R)\ge d_R(x)+ \sum_{z\in X_2}d_R(z)&\ge |\overline{X}|+(\alpha+2-|\overline{X}|)|X_2| 
        \\ \label{5.5} &=|\overline{X}|+(\alpha+2-|\overline{X}|)(d-|\overline{X}|-|X_1|).
    \end{align}
    Note that \eqref{5.4} implies $\alpha +2-|\overline{X}|>0$. Then, \eqref{5.5} yields
    $$|X_1|\ge (d-|\overline{X}|)\left( 1-\frac{1}{\alpha +2-|\overline{X}|} \right).$$
    Substituting it into \eqref{5.3}, we derive
    \begin{equation*}
        d\ge |\overline{X}|\left( \frac{2(d-|\overline{X}|)}{\alpha-1}\left( 1-\frac{1}{\alpha +2-|\overline{X}|} \right)+1 \right).
    \end{equation*}
    By dividing by $d-|\overline{X}|$, we have
    $$1\ge \frac{2|\overline{X}|}{\alpha-1}\left( 1-\frac{1}{\alpha +2-|\overline{X}|} \right).$$
    Then, elementary calculus shows that we have either 
    $|\overline{X}|\ge (3\alpha+1+\sqrt{\alpha^2-2\alpha+17})/4$ or $|\overline{X}|\le (3\alpha+1-\sqrt{\alpha^2-2\alpha+17})/4$. However, the first inequality contradicts \eqref{5.4}, and the second inequality contradicts Claim \ref{claim4}.

    \noindent\textbf{Case 2.} We have $\beta\ge\alpha$.

    According to Claim \ref{claim5}, we have
    \begin{equation}\label{5.6}
        d\ge e(L,R)\ge \sum_{y\in \overline{X}}d_{L}(y) \ge |\overline{X}|\left( |X_1|+1 \right).
    \end{equation}
    Furthermore, by Claim \ref{claim2}, we deduce that 
    \begin{equation}\label{5.7}
        d\ge e(L,R)\ge d_R(x)+ \sum_{z\in X_2}d_R(z)\ge |\overline{X}|+\beta|X_2|\ge |\overline{X}|+2|X_2| =2d-|\overline{X}|-2|X_1|,
    \end{equation}
    which implies
    \begin{equation}\label{5.8}
        |X_1|\ge \frac{d-|\overline{X}|}{2}.
    \end{equation}
    Substituting this into \eqref{5.6} yields
    \begin{equation*}
        d\ge |\overline{X}|\left( \frac{d-|\overline{X}|}{2}+1 \right).
    \end{equation*}
    That is, $|\overline{X}|\le 2$. However, Claim \ref{claim2} implies $|\overline{X}|\ge 2$. It follows that $|\overline{X}|= 2$, and all inequalities \eqref{5.6}, \eqref{5.7}, and \eqref{5.8} take equal. Thus, we have 
    $|X_1|= (d-2)/2$, and hence $|X_2|= (d-2)/2$. 
    Equality \eqref{5.7} implies that $\beta =2$, and $d_R(z)=2$ for any $z\in X_2$. Set $\overline{X}:=\{y_1,y_2\}$. Equality \eqref{5.6} gives $e(L,R)= \sum_{i=1}^2d_{L}(y_i)$. Therefore, each vertex in $X_2$ is a common neighbor of $y_1$ and $y_2$. However, since $\alpha\le\beta =2$, $y_1$ and $y_2$ have at most one common neighbor, except $x$. It follows that $|X_2|\le 1$, which implies $d=4$. Set $X_1:=\{ z_1 \}$ and $X_2:=\{ z_2 \}$. Then $xy_1,xy_2,z_2y_1$, and $z_2y_2$ are all the edges between $L$ and $R$. Note that $|N_1(z_1)\backslash \{ x,z_2\}|\ge d-2=2$. There exists a vertex $z_3\in N_1(z_1)\backslash \{ x,z_2\}$ such that $z_3$ is not adjacent to $z_2$. Since $N_1(x)=\{z_1,z_2,y_1,y_2\}$, $z_3$ is not adjacent to $x$. However, $z_3$ and $x$ have only one common neighbor $z_1$, which is contradictory to $\beta =2$.

    \noindent\textbf{Case 3.} We have $\alpha>\beta=2$.

    Applying Theorem \ref{thm:ARGCurvature} with $\beta =2$, we find
    \begin{equation}\label{Kx}
    K_{BE}(x)=2+\frac{\alpha}{2}+\left(-\frac{\alpha^2}{4}+\min_{\lambda\in \mathrm{sp} \left(A_{N_1}\right)}\left(\lambda-\frac{\alpha}{2}\right)^2\right)_-,
    \end{equation}
    where $A_{N_1}$ is the adjacency matrix of the subgraph induced by $N_1(x)$. Let us set $A:=N_2(x)\cap L$ and $\overline{A}:=N_2(x)\cap R$. 
    Then, Corollary \ref{lemma2} shows that
    \begin{equation}\notag 
        e(X,\overline{X})+ e(X,\overline{A})+ e(\overline{X},A)\ge \frac{1}{4d} \left(2K_{BE}(x)+2d-\alpha-4\right)
        |X||\overline{X}|.
    \end{equation}
    Since $e(X,\overline{X})+ e(X,\overline{A})+ e(\overline{X},A)\le e(L,R)-e(\{x\},\overline{X})\le d-|\overline{X}|=|X|$, we derive
    \begin{equation}\label{5.10}
        1\ge \frac{1}{4d} \left(2K_{BE}(x)+2d-\alpha-4\right)|\overline{X}|.
    \end{equation}

    Now, let us divide Case 3 into two subcases.

    \noindent\textbf{Subcase 1.} We have $d\ge \frac{1}{2}\alpha(\alpha+ 3)$.

    Note that \eqref{Kx} implies $K_{BE}(x)\ge 2+\frac{\alpha}{2}-\frac{\alpha^2}{4}$. Then, inequality \eqref{5.10} yields
    \begin{equation*}
        1\ge \frac{1}{4d} \left(2\left( 2+\frac{\alpha}{2}-\frac{\alpha^2}{4} \right)+2d-\alpha-4\right)|\overline{X}|.
    \end{equation*}
    It follows that
    \begin{equation*}
        |\overline{X}|\le \frac{4d}{2d-\frac{\alpha^2}{2}} = 2+ \frac{\alpha^2}{2d-\frac{\alpha^2}{2}} \le 2+\frac{2\alpha}{\alpha +6}.
    \end{equation*}
    By Claim \ref{claim4}, we have $|\overline{X}|\ge \left\lceil \frac{\alpha}{2} \right\rceil+1$. Then, elementary calculation gives $\alpha\le 2$, which contradicts the assumption in Case 3.

    \noindent\textbf{Subcase 2.} We have $d< \frac{1}{2}\alpha(\alpha+ 3)$.

    By Theorem \ref{BCN}, the subgraph $G[N_1]$ induced by $N_1(x)$ is a disjoint union of some cliques. Since $G[N_1]$ is $\alpha$-regular, it is a disjoint union of some cliques of size $\alpha+1$. Thus, the adjacency matrix $A_{N_1}$ of $G[N_1]$ satisfies
    $$\mathrm{sp} \left( A_{N_1} \right)=\{ -1,\alpha \}.$$
    Then, \eqref{Kx} yields $K_{BE}(x)=2+\frac{\alpha}{2}$. It follows by \eqref{5.10} that $|\overline{X}|\le 2$. However, since $\alpha\ge 3$, Claim \ref{claim4} implies $|\overline{X}|\ge 3$, which is a contradiction.
\end{proof}
    
\section*{Acknowledgement}
This work is supported by the National Key R \& D Program of China 2023YFA1010200 and the National Natural Science Foundation of China No. 12431004. K. C. is partially supported by the New Lotus Scholars Program PB22000259. J. H. K. is partially supported by the National Natural Science Foundation of China No. 12471335.
\noindent

\end{document}